\documentclass{article}

\usepackage{amsmath}
\usepackage[shortlabels]{enumitem}
\usepackage{amssymb}
\usepackage{amsthm}
\usepackage{tikz-cd}
\usepackage{mathrsfs}
\usepackage{authblk}

\newtheorem{thm}{Theorem}[section]
\newtheorem{cor}[thm]{Corollary}
\newtheorem{lem}[thm]{Lemma}
\newtheorem{prop}[thm]{Proposition}
\theoremstyle{remark}
\newtheorem{rmk}{Remark}
\theoremstyle{definition}

\newtheorem*{qst}{Question}

\DeclareMathOperator{\diff}{Diff}
\DeclareMathOperator{\aut}{Aut}
\DeclareMathOperator{\Hom}{Hom}
\DeclareMathOperator{\im}{im}

\DeclareMathOperator{\vect}{Vect}

\bibliographystyle{plain}

\begin{document}

\title{On $1$-connected $8$-manifolds with the same homology as $S^3\times S^5$}
\author{Xueqi Wang}
\affil{Institute of Mathematics, Chinese Academy of Sciences, Bejing, 100190, China \authorcr Email: wangxueqi@amss.ac.cn}
\date{}
\maketitle

\begin{abstract}
In this article, we classify $1$-connected $8$-dimensional Poincar\'e complexes, topological manifolds and smooth manifolds with the same homology as $S^3\times S^5$. Some questions of Escher-Ziller are also discussed.
\end{abstract}

\section{Introduction}\label{sec:1}

The classification of manifolds is an interesting and challenging task in topology. However, so far, most of our knowledge is still concentrated on manifolds connected up to the middle dimension. In this article, we mainly concerns the following:

\begin{qst}
Give a classification of all $1$-connected $8$-manifolds with the same homology as $S^3\times S^5$.
\end{qst}

We'll call such $8$-manifolds of type ($\ast$). The motivation comes from a class of highly concerned $1$-connected $7$-manifolds. These manifolds satisfy either
\begin{enumerate}
  \item $H^2\cong \mathbb{Z}\{u\}, H^3=0, H^4\cong \mathbb{Z}_r\{u^2\}$, with $r\geq 1$, which will be called of type ($r$); or
  \item $H^2\cong \mathbb{Z}\{u\}, H^3=\mathbb{Z}\{v\}, H^4\cong \mathbb{Z}\{u^2\}, H^5\cong\mathbb{Z}\{uv\}$, i.e. the cohomology ring is isomorphic to that of $\mathbb{C}P^2\times S^3$, which will be called of type ($0$).
\end{enumerate}
Manifolds of type ($r$) are such as Aloff-Wallach manifolds, Eschenberg spaces, etc., which provide valuable examples of positive curved manifolds (see \cite{Zil14} for a survey). Complete invariants for such manifolds have been given in \cite{KS88}. The most natural examples of manifolds of type ($0$) may be the $S^3$-bundles over $\mathbb{C}P^2$ which have cross sections. The classification problem will be considered in our later paper \cite{W}. Now if we consider the $S^1$-bundle over a manifold of type ($r$) ($r\geq 0$) with Euler class $u$, then easy calculation shows that the total space is of type ($\ast$). Therefore, knowing more about manifolds of type ($\ast$) may give us more information on manifolds of type ($r$).

Let $\mathcal{P}$, $\mathcal{T}$, $\mathcal{S}$ be the homotopy equivalence, homeomorphism, diffeomorphism classes of Poincar\'e duality spaces, topological manifolds, smooth manifolds of type ($\ast$), respectively. Our main result is the following:
\begin{thm}\label{thm:16}
  \begin{enumerate}
    \item $\mathcal{P}=\{S^3\times S^5, X_{1,0}, X_{0,1}, X_{1,1}, SU(3)\}$. Here $X_{r,s}=(S^3\vee S^5)\cup_{[\iota_3,\iota_5]+ra_3\eta_6+s\eta_5\eta_6} D^8$, where $\iota_k$ corresponds to the identity map of $S^k$, $a_3\eta_6$ and $\eta_5\eta_6$ are certain elements in $\pi_7(S^3)$ and $\pi_7(S^5)$, respectively.
    \item $\mathcal{T}=\{S^3\times S^5, SU(3), M_{1,0}\}$, where $M_{1,0}$ is the total space of the only nontrivial $S^3$-bundle over $S^5$ which has a cross section. Note that $M_{1,0}\simeq X_{1,0}$.
    \item\label{1.3} $\mathcal{S}=\{S^3\times S^5, S^3\times S^5\#\Sigma^8, SU(3), M_{1,0}\}$, where $\Sigma^8$ is the only exotic $8$-sphere.
  \end{enumerate}
\end{thm}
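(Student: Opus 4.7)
\emph{Homotopy classification.} Every type-$(\ast)$ Poincar\'e complex $X$ admits a minimal CW decomposition with one cell in each of dimensions $0, 3, 5, 8$, so $X = (S^3 \cup_\phi e^5) \cup_\psi e^8$ with $\phi \in \pi_4(S^3) \cong \mathbb{Z}/2$. Thus the $5$-skeleton is either $S^3 \vee S^5$ (if $\phi = 0$) or $\Sigma\mathbb{C}P^2$ (if $\phi = \eta$). In the first case Hilton's theorem yields
\[
\pi_7(S^3 \vee S^5) \;\cong\; \pi_7(S^3) \,\oplus\, \pi_7(S^5) \,\oplus\, \mathbb{Z}\langle [\iota_3,\iota_5] \rangle
\]
(higher Hilton summands start in degree $\geq 9$ and do not contribute), so one writes $\psi = r\,a_3\eta_6 + s\,\eta_5\eta_6 + t\,[\iota_3,\iota_5]$. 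Poincar\'e duality forces the cup-product pairing $H^3 \otimes H^5 \to H^8$ to be unimodular, hence $t = \pm 1$; orientation-reversal on either sphere identifies the two signs, yielding the four classes $X_{r,s}$ with $X_{0,0} \simeq S^3 \times S^5$. In the $\phi = \eta$ case, an analogous analysis of $\pi_7(\Sigma\mathbb{C}P^2)$ via the cofiber sequence $S^3 \to \Sigma\mathbb{C}P^2 \to S^5$, combined with the Poincar\'e-duality constraint, leaves a single homotopy type, which I identify with $SU(3)$ through the nonvanishing of $\mathrm{Sq}^2$ on $H^3(SU(3);\mathbb{Z}/2)$.

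\emph{Topological classification.} I would apply surgery to each $X \in \mathcal{P}$. Since $H^4(X) = 0$ the signature vanishes, so the simply connected surgery obstruction in $L_8(\mathbb{Z}) \cong \mathbb{Z}$ is automatic; hence $X$ is homotopy equivalent to a topological $8$-manifold iff its Spivak normal fibration admits a $TOP$-reduction. For $S^3\times S^5$, $SU(3)$, and the nontrivial section-carrying $S^3$-bundle $M_{1,0}\to S^5$ (which I show is homotopy equivalent to $X_{1,0}$ by computing its top-cell attaching map from the clutching function), such a reduction exists by construction. For $X_{0,1}$ and $X_{1,1}$, the $\eta_5\eta_6$-component of $\psi$ supplies an obstruction, essentially because $\eta_5\eta_6$ lies outside the image of the relevant $J$-homomorphism, so no manifold structure exists in their homotopy class.

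\emph{Smooth classification.} For each topological $M \in \mathcal{T}$, smooth structures form a torsor (when nonempty) over a group built from $\Theta_*$. The Kirby--Siebenmann class in $H^4(M;\mathbb{Z}/2) = 0$ vanishes, so $M$ is smoothable. In this dimension the only nontrivial input is $\Theta_8 \cong \mathbb{Z}/2$, generated by the unique exotic $8$-sphere $\Sigma^8$, so $M$ carries at most two smooth structures, differing by connect-sum with $\Sigma^8$. These coincide exactly when the inertia group $I(M) = \Theta_8$. I expect $I(S^3\times S^5) = 0$, provable by an $\alpha$- or $\hat A$-style invariant on a parallelizable coboundary of $\Sigma^8$, giving two distinct smoothings; and $I(SU(3)) = I(M_{1,0}) = \mathbb{Z}/2$ by exhibiting self-diffeomorphisms absorbing $\Sigma^8$, using the Lie group structure of $SU(3)$ and the bundle structure of $M_{1,0}$ respectively. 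This inertia computation is the main obstacle, since for $M_{1,0}$ it requires specifically exploiting its $S^3$-bundle structure rather than any generic homotopy-theoretic input.
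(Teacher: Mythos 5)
The homotopy classification (part 1) matches the paper's approach closely: a minimal CW decomposition $S^3\cup_{\phi_1}D^5\cup_{\phi_2}D^8$, splitting into the $\phi_1=\eta_3$ case (where $\pi_7(\Sigma\mathbb{C}P^2)\cong\mathbb{Z}$ and cup products force $SU(3)$) and the $\phi_1=0$ case (Hilton's theorem, $t=\pm1$ from Poincar\'e duality, then $X_{r,s}$). Note though that you still owe the argument that $X_{1,0},X_{0,1},X_{1,1}$ are pairwise distinct and distinct from $S^3\times S^5$; this is not automatic from the CW description and the paper handles it by analysing $\mathcal{E}(S^3\vee S^5)$. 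The topological classification (part 2) is in the same spirit as the paper (Browder--Novikov, obstruction to a $TOP$-reduction of the Spivak fibration). The paper's concrete version is cleaner: Spanier--Whitehead duality gives the Thom space of the Spivak fibration, its restriction to $S^3$ is $S^l\cup_{\eta^2}D^{l+3}$, and $\pi_3(BTOP)=0$ rules out a $TOP$-structure. Your ``$\eta_5\eta_6$ lies outside the image of the relevant $J$-homomorphism'' is the right intuition but is vague as stated about which obstruction group is being computed.

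The genuine gap is in part 3. You correctly reduce to: $[M,TOP/O]\cong\Theta_8\cong\mathbb{Z}/2$ gives exactly two concordance classes of smooth structures, and diffeomorphism classes are then governed by the inertia group $I(M)$. But neither inertia computation is actually carried out, and the one method you name cannot work: you propose to show $I(S^3\times S^5)=0$ via ``an $\alpha$- or $\hat{A}$-style invariant on a parallelizable coboundary of $\Sigma^8$,'' but $bP_9=0$, so $\Sigma^8$ bounds \emph{no} parallelizable manifold; there is no such coboundary to integrate over. (Also $\alpha:\Omega_8^{spin}\to KO_8\cong\mathbb{Z}$ is rational and vanishes on any homotopy $8$-sphere, so it cannot separate $\Sigma^8$ from $S^8$.) For $I(SU(3))$ and $I(M_{1,0})$ you offer only ``exhibit self-diffeomorphisms absorbing $\Sigma^8$'' as a hope and flag it as the main obstacle. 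The paper avoids this impasse by a structurally different route: every smooth $8$-manifold of type ($\ast$) is a twisted double $S^3\times D^5\cup_f S^3\times D^5$, so the diffeomorphism classification becomes an orbit computation for the $\bar{D}_2^{3,4}$-action on $D_1^{3,4}\oplus\Theta_8$ \`a la Levine, with the $\Theta_8$-part of the action controlled by Milnor's pairing $T:\pi_4(SO(4))\otimes\pi_3(SO(5))\to\Theta_8$ (computed by Frank). That computation is precisely what makes $\Sigma^8$ absorbable by $SU(3)$ and $M_{1,0}$ but not by $S^3\times S^5$. Without some input of this strength (or a citation to an inertia-group theorem such as Schultz's for $S^3\times S^5$, together with a separate argument for $SU(3)$ and $M_{1,0}$), the smooth classification is not established.
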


As a consequence of Theorem \ref{thm:16}, we have the following rigidity theorem:

\begin{cor}
  \begin{enumerate}
    \item Two topological $8$-manifolds of type ($\ast$) are homeomorphic if and only if they are homotopy equivalent. All topological $8$-manifolds of type ($\ast$) are smoothable.
    \item A closed smooth $8$-manifold $M$ is diffeomorphic to $M_{1,0}$ or $SU(3)$ if and only if they are homotopy equivalent.
  \end{enumerate}
\end{cor}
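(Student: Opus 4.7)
The plan is to read off both parts directly from Theorem \ref{thm:16}: the corollary is essentially a bookkeeping consequence of the enumerations of $\mathcal{P}$, $\mathcal{T}$, $\mathcal{S}$, so no substantive new argument is needed. The only thing to check is that the natural forgetful maps $\mathcal{S}\to\mathcal{T}\to\mathcal{P}$ restrict to bijections onto the relevant subsets of $\mathcal{P}$, and the main (minor) obstacle is simply matching up names: in particular, recognizing that $M_{1,0}\simeq X_{1,0}$ so that manifolds listed in $\mathcal{T}$ and $\mathcal{S}$ can be located in the list $\mathcal{P}$.

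For part (1), I would start from Theorem \ref{thm:16}(1), which gives five pairwise non-homotopy-equivalent members of $\mathcal{P}$. The forgetful map $\mathcal{T}\to\mathcal{P}$ sends the three classes of Theorem \ref{thm:16}(2), namely $S^3\times S^5$, $M_{1,0}$ and $SU(3)$, to three distinct elements of $\mathcal{P}$ (using $M_{1,0}\simeq X_{1,0}$), hence is injective. That injectivity is exactly the statement that homotopy-equivalent topological $8$-manifolds of type $(\ast)$ are homeomorphic, and the converse is trivial. The smoothability claim is then immediate, since the three representatives $S^3\times S^5$, $M_{1,0}$, $SU(3)$ of $\mathcal{T}$ are smooth by construction.

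For part (2), I would apply the same reasoning to the forgetful map $\mathcal{S}\to\mathcal{P}$. By Theorem \ref{thm:16}(3) the set $\mathcal{S}$ has four elements. Because $\Sigma^8$ is a homotopy $8$-sphere, $S^3\times S^5\#\Sigma^8\simeq S^3\times S^5$, so these two smooth manifolds share a homotopy type; this is the only collision, and the fibers of $\mathcal{S}\to\mathcal{P}$ over $[SU(3)]$ and over $[M_{1,0}]=[X_{1,0}]$ are singletons. Hence a smooth $8$-manifold of type $(\ast)$ which is homotopy equivalent to $SU(3)$ (respectively $M_{1,0}$) must be diffeomorphic to it, which is the nontrivial direction of the biconditional; the reverse direction is again trivial.
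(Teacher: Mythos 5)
Your proof is correct and takes the same (implicit) approach as the paper, which states this corollary directly after Theorem \ref{thm:16} with no separate proof. The bookkeeping you spell out — injectivity of the forgetful maps $\mathcal{T}\to\mathcal{P}$ and $\mathcal{S}\to\mathcal{P}$ on the relevant classes, via the identification $M_{1,0}\simeq X_{1,0}$, together with the observation that the representatives of $\mathcal{T}$ are smooth — is exactly the argument the paper leaves to the reader.
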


As the $n$-th homotopy group of the total space of a circle bundle coincides with the base space, and obviously $\pi_i(M_{1,0})\cong \pi_i(S^3\times S^5)$, we have

\begin{cor}
  Let $N$ be a $7$-manifold of type ($r$). Then either $\pi_i(N)\cong \pi_i(S^3\times S^5), i\geq 3$ or $\pi_i(N)\cong \pi_i(SU(3)), i\geq 3$.
\end{cor}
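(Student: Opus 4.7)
The plan is to reduce the corollary to the 8-manifold classification in Theorem \ref{thm:16} by associating to $N$ a circle bundle whose total space is of type $(\ast)$.

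Given a 1-connected 7-manifold $N$ of type $(r)$, let $\pi\colon E\to N$ be the principal $S^1$-bundle whose Euler class is a generator $u\in H^2(N;\mathbb{Z})$. I would first verify that $E$ is a 1-connected closed 8-manifold of type $(\ast)$. Simple connectivity follows from the homotopy exact sequence of the fibration: the boundary $\pi_2(N)\to\pi_1(S^1)$ is surjective because the generator of $\pi_2(N)\cong\mathbb{Z}$ pairs to $\pm 1$ with $u$. For the homology one runs the Serre spectral sequence (equivalently, the Gysin sequence); the only delicate point, present when $r\geq 1$, is the resulting extension $0\to\mathbb{Z}\to H_3(E)\to\mathbb{Z}_r\to 0$, and this is forced to be the nontrivial one (giving $H_3(E)\cong\mathbb{Z}$) by Poincar\'e duality on $E$: since $H_4(E)=0$, universal coefficients yield $H_3(E)\cong H^5(E)$, which is torsion-free.

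Having confirmed $E$ is of type $(\ast)$, the long exact homotopy sequence together with $\pi_k(S^1)=0$ for $k\geq 2$ give $\pi_i(E)\cong\pi_i(N)$ for every $i\geq 3$. By Theorem \ref{thm:16}, $E$ is (diffeo- or homeo-)morphic to one of $S^3\times S^5$, $S^3\times S^5\#\Sigma^8$, $SU(3)$, $M_{1,0}$, so up to homotopy equivalence $E$ belongs to $\{S^3\times S^5,\,SU(3),\,M_{1,0}\}$. Finally, because $M_{1,0}$ is an $S^3$-bundle over $S^5$ admitting a cross section, the section splits the long exact homotopy sequence of this fibration and yields $\pi_i(M_{1,0})\cong\pi_i(S^3)\oplus\pi_i(S^5)\cong\pi_i(S^3\times S^5)$ for all $i$. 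Hence $\pi_i(N)$ is isomorphic to either $\pi_i(S^3\times S^5)$ or $\pi_i(SU(3))$ whenever $i\geq 3$.

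I do not expect a truly hard step: the one point requiring a moment's thought is resolving the extension in the Serre spectral sequence of the circle bundle, which is dispatched by Poincar\'e duality on the 8-manifold $E$. The case $r=0$ is even cleaner, as the Gysin calculation for type $(0)$ involves no torsion and no extension problem.
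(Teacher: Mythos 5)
Your argument is exactly the one the paper intends: pass to the $S^1$-bundle $E\to N$ with Euler class $u$, verify that $E$ is of type $(\ast)$ (the paper just calls this an \lq\lq easy calculation\rq\rq), apply Theorem \ref{thm:16}, and use that $M_{1,0}$ is an $S^3$-bundle over $S^5$ with a section to conclude $\pi_i(M_{1,0})\cong\pi_i(S^3\times S^5)$. One small wording slip: the isomorphism $H_3(E)\cong H^5(E)$ is Poincar\'e duality, not universal coefficients; UCT is only what shows $H^5(E)$ is torsion-free once you know $H_4(E)=0$, and establishing $H_4(E)=0$ itself uses the Gysin sequence together with the fact that $u^2$ generates $H^4(N)$.
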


The article is organized as follows. Section \ref{sec:2} gives some notations and elementary results which will be used later. Section \ref{sec:3} and \ref{sec:5} deals with the classification problem. Section \ref{sec:4} is a preparation for section \ref{sec:5}. Finally, in section \ref{sec:6}, we discuss some questions in a paper of Escher and Ziller \cite{EZ14}.

\section{Preliminaries}\label{sec:2}

We list here some notations and basic results which will be used later.

Let $x_0=(1,0,\ldots,0)^T$ be the base point of $S^n$. There is a principal fiber bundle
\[
SO(n)\xrightarrow{i}SO(n+1)\xrightarrow{p} S^n,
\]
defined by
\[
i:A\mapsto
\begin{pmatrix}
    1 &  \\
     & A
  \end{pmatrix},\quad
  p:B\mapsto Bx_0.
\]
Noticed that $SO(3)\xrightarrow{i} SO(4)\xrightarrow{p} S^3$ has a cross section $\sigma: S^3\to SO(4)$, given by the canonical identification of $S^3$ with $Sp(1)\subset SO(4)$. Explicitly,
\[
\sigma:
  \begin{pmatrix}
    a \\
    b \\
    c \\
    d
  \end{pmatrix}
  \mapsto
  \begin{pmatrix}
    a & -b & -c & -d \\
    b & a & -d & c \\
    c & d & a & -b \\
    d & -c & b & a
  \end{pmatrix}.
\]
This gives the diffeomorphism $SO(4)\cong SO(3)\times S^3$ and the isomorphism $\pi_4(SO(4))\cong\pi_4(SO(3))\oplus\pi_4(S^3)$.

Let $\iota_n\in\pi_n(S^n)$ be represented by the identity map of $S^n$, $\eta_2\in\pi_3(S^2)$ be represented by the Hopf map, $\eta_n=\Sigma^{n-2}\eta_2\in\pi_{n+1}(S^n)$.

\begin{lem}\label{thm:5}
  Notations as above.
  \begin{enumerate}
    \item $\pi_3(SO(3))\cong \mathbb{Z}\{b\}$ for some element $b$, \\
    $\pi_3(SO(4))\cong\mathbb{Z}\{i_*b\}\oplus\mathbb{Z}\{\sigma_*\iota_3\}$,\\
    $\pi_3(SO(5))\cong \mathbb{Z}\{a\}$, with $a=i_*\sigma_*\iota_3$;
    \item $\pi_4(SO(3))\cong \mathbb{Z}_2\{b\eta_3\}$, \\
    $\pi_4(SO(4))\cong\mathbb{Z}_2\{i_*b\eta_3\}\oplus \mathbb{Z}_2\{\sigma_*\eta_3\}$.
  \end{enumerate}
\end{lem}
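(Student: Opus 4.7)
The plan is to build up from $SO(3)$ to $SO(5)$ through the principal fiber sequences $SO(n)\to SO(n+1)\to S^n$, using the section $\sigma$ when $n=3$ and the long exact sequence when $n=4$. First, since $SO(3)\cong\mathbb{R}P^3$ has universal cover $S^3$, the covering map induces isomorphisms $\pi_k(S^3)\xrightarrow{\cong}\pi_k(SO(3))$ for $k\geq 2$; taking $b$ to be the image of $\iota_3$ immediately yields $\pi_3(SO(3))\cong\mathbb{Z}\{b\}$ and $\pi_4(SO(3))\cong\mathbb{Z}_2\{b\eta_3\}$. For $SO(4)$, I would use the diffeomorphism $SO(4)\cong SO(3)\times S^3$ given by $(A,q)\mapsto i(A)\sigma(q)$: this splits $\pi_k(SO(4))$ as $i_*\pi_k(SO(3))\oplus\sigma_*\pi_k(S^3)$, and reading off $k=3,4$ produces both claimed decompositions.

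The core of the lemma is $\pi_3(SO(5))$. The long exact sequence of $SO(4)\to SO(5)\to S^4$ reads
\[
\pi_4(S^4)\xrightarrow{\partial}\pi_3(SO(4))\xrightarrow{i_*}\pi_3(SO(5))\to\pi_3(S^4)=0,
\]
so $i_*$ is surjective with kernel generated by $\partial\iota_4$, which by the standard construction of the connecting map is precisely the clutching function of $TS^4\to S^4$. The lemma thus reduces to identifying this clutching function in the basis $(i_*b,\sigma_*\iota_3)$.

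Pinning this down is the main obstacle. I would pass to the spin double cover $\mathrm{Spin}(4)\cong SU(2)\times SU(2)$, acting on $\mathbb{H}\cong\mathbb{R}^4$ by $(p,q)\cdot x=pxq^{-1}$. Under this cover, oriented rank-$4$ bundles over $S^4$ are classified by pairs $(m,n)\in\pi_3(SU(2))^{\oplus 2}\cong\mathbb{Z}^2$ with Euler number $m+n$ and $p_1=\pm 2(m-n)$. Since $e(TS^4)=2$ and $p_1(TS^4)=0$, the tangent bundle corresponds to $(1,1)$. On the other hand, $SO(3)\hookrightarrow SO(4)$ (the stabilizer of $1\in\mathbb{H}$) lifts to the diagonal $SU(2)\hookrightarrow SU(2)\times SU(2)$, while $\sigma:Sp(1)\to SO(4)$, being left multiplication by $u$, lifts to $u\mapsto(u,1)$. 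Therefore $i_*b\leftrightarrow(1,1)$ and $\sigma_*\iota_3\leftrightarrow(1,0)$, so $\partial\iota_4=\pm i_*b$, and the quotient $\pi_3(SO(5))$ is infinite cyclic, generated by the image of $\sigma_*\iota_3$, namely $a=i_*\sigma_*\iota_3$ as claimed.
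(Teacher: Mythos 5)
The paper states this lemma without proof, treating it as a collection of standard facts. Your overall strategy (the universal cover $S^3\to SO(3)$, the splitting $SO(4)\cong SO(3)\times S^3$ via the section $\sigma$, and the long exact sequence of $SO(4)\to SO(5)\to S^4$) is the natural one and the first two parts are fine. However, the crucial step for $\pi_3(SO(5))$ contains a computational error.

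With the convention you fix, $(p,q)\cdot x=pxq^{-1}$, your characteristic-class dictionary is backwards: the Euler number of the rank-$4$ bundle over $S^4$ with clutching class $(m,n)$ is $\pm(m-n)$ and its first Pontryagin number is $\pm 2(m+n)$, not $m+n$ and $\pm 2(m-n)$ as you wrote. This is visible from your own next sentence: you correctly identify $i_*b\leftrightarrow(1,1)$ (the diagonal = conjugation action), but any clutching function factoring through $SO(3)\subset SO(4)$ yields a bundle with a nowhere-vanishing section, hence Euler number $0$; your formula would give $e(1,1)=2$, a contradiction. With the corrected formula, $e(TS^4)=2$, $p_1(TS^4)=0$ forces $\partial\iota_4=\pm(1,-1)$, not $\pm(1,1)$. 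Rewriting $(1,-1)$ in the basis $i_*b=(1,1)$, $\sigma_*\iota_3=(1,0)$ gives $\partial\iota_4=\pm\bigl(2\sigma_*\iota_3-i_*b\bigr)$, so the claim $\partial\iota_4=\pm i_*b$ is false. The quotient $\mathbb{Z}^2/\langle(1,-1)\rangle$ is still infinite cyclic generated by the image of $\sigma_*\iota_3$, so the conclusion $\pi_3(SO(5))\cong\mathbb{Z}\{a\}$ with $a=i_*\sigma_*\iota_3$ happens to survive. But your intermediate claim would force $i_*(i_*b)=0$ in $\pi_3(SO(5))$, which is false: in fact $i_*(i_*b)=\pm 2a$, consistent with both having $p_1=\pm 4$ (and this relation matters elsewhere in the paper, e.g.\ in Lemma~\ref{thm:18}(3) where $a_4=\Sigma a_3$ and $\nu_4=J\sigma_*\iota_3$ are distinct generators of the free part of $\pi_7(S^4)$).
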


Let $G_m=\{f:S^{m-1}\to S^{m-1}| \deg f=1\}$, $F_m=\{f\in G_m|f(x_0)=x_0\}$. We have the following ladder of fibration sequences:
\[
\begin{tikzcd}
SO(n) \arrow[r,"i"] \arrow[d] & SO(n+1) \arrow[d] \arrow[r,"p"] & S^n \arrow[d,equal]   \\
F_{n+1} \arrow[r,"\bar{i}"] & G_{n+1} \arrow[r,"\bar{p}"] & S^n
\end{tikzcd}
\]

We will denote the two maps $\pi_k(SO(n))\to\pi_k(F_{n+1})$ and $\pi_k(SO(n+1))\to \pi_k(G_{n+1})$ by $\bar{\mu}'$ and $\bar{\mu}$, respectively.

Define a map $I:\pi_n(F_m)\to \pi_{n+m}(S^{m})$ as follows. Write $S^{n+m}$ as $S^n\times D^m\cup D^{n+1}\times S^{m-1}$, and let $q:D^m\to D^m/\partial D^m\cong S^m$ be the quotient map. For any $\beta=[f]\in\pi_n(F_m)$, its image $I(\beta)$ is defined to be represented by
\[
I(f)(x,y)=
\begin{cases}
  f(x)(q(y)), & \mbox{if } (x,y)\in S^n\times D^m \\
  x_0, & \mbox{otherwise}.
\end{cases}
\]
By definition, there is a commutative diagram
\begin{equation}\label{eq:1}\tag{$\dag$}
  \begin{tikzcd}
  \pi_k(SO(n)) \arrow[rd,"J"] \arrow[d,"\bar{\mu}'"] & \\
  \pi_k(F_n) \arrow[r,"I"] & \pi_{k+n}(S^n)
  \end{tikzcd}
\end{equation}

\begin{thm}[\cite{Whi46}]\label{thm:6}
  $I$ is a group isomorphism.
\end{thm}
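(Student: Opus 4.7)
The overall strategy is to verify that $I$ is a group homomorphism and to construct a two-sided inverse on based homotopy classes.

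Well-definedness and the homomorphism property are direct consequences of the formula defining $I$. A based homotopy $F \colon S^n \times [0,1] \to F_m$ yields, by pointwise application of the defining formula, a based homotopy between $I(f_0)$ and $I(f_1)$, so $I$ descends to homotopy classes. For additivity, the pinch map $S^n \to S^n \vee S^n$ that encodes the sum in $\pi_n(F_m)$ is compatible with the pinch map on $S^{n+m}$ coming from the join decomposition $S^{n+m} = S^n \times D^m \cup D^{n+1} \times S^{m-1}$, and a direct unpacking of the formula for $I$ on a representative of $[f_0]+[f_1]$ shows $I$ carries one pinch sum to the other.

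For bijectivity I would construct an explicit inverse $K$. Given $[g] \in \pi_{n+m}(S^m)$ represented by $g \colon S^{n+m} \to S^m$, use obstruction theory to deform $g$ so that $g(D^{n+1} \times S^{m-1}) = \{x_0\}$: this is possible since $D^{n+1} \times S^{m-1}$ deformation retracts onto a copy of $S^{m-1}$ and any map $S^{m-1} \to S^m$ is null-homotopic. The restricted map $g|_{S^n \times D^m}$ then sends $S^n \times \partial D^m$ to $x_0$, so for each $x \in S^n$ the slice $g(x, \cdot) \colon D^m \to S^m$ factors through $q \colon D^m \to D^m/\partial D^m \cong S^m$. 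A geometric desuspension argument, essentially reversing the radial extension implicit in the formula for $I$, recovers from this family a based self-map $f(x) \colon S^{m-1} \to S^{m-1}$ for each $x$, producing a map $f \colon S^n \to F_m$ satisfying $I(f) \simeq g$ by construction.

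The main obstacle is ensuring that $K$ is well-defined on homotopy classes. This requires running the same obstruction deformation parametrically on a homotopy $G \colon S^{n+m} \times [0,1] \to S^m$ between two representatives of $[g]$, producing a homotopy in $F_m$ between the extracted maps $f$. Once this relative argument is in hand, direct verification on representatives shows that $I$ and $K$ are mutually inverse, and combined with the homomorphism property this yields that $I$ is an isomorphism.
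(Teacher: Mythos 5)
The paper itself offers no argument for this theorem; it is stated with a citation to Whitehead's 1946 paper, so there is no internal proof to compare against. Evaluating your proposal on its own terms, the well-definedness and homomorphism observations are fine, but the inverse construction has a genuine gap at the step you call a ``geometric desuspension argument.''

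Concretely: after deforming $g$ to be constant on $D^{n+1}\times S^{m-1}$, the slice $g(x,\cdot)\colon D^m\to S^m$ factors through $q$ to yield, for each $x$, a based map $\bar g(x)\colon S^m\to S^m$. To land in $F_m$, which the paper defines as based degree-one self-maps of $S^{m-1}$, you must produce from this family a family $f(x)\colon S^{m-1}\to S^{m-1}$, i.e.\ you must lift the resulting map $S^n\to \mathrm{Map}_*(S^m,S^m)$ through the suspension map $\mathrm{Map}_*(S^{m-1},S^{m-1})\to\mathrm{Map}_*(S^m,S^m)$. There is no such reversal in general: on homotopy groups this lifting is precisely surjectivity of the suspension $\pi_{n+m-1}(S^{m-1})\to\pi_{n+m}(S^m)$, which fails outside the Freudenthal range (already for $m=2$, $n=1$, where the source is $\pi_2(S^1)=0$ and the target is $\pi_3(S^2)\cong\mathbb{Z}$). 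So the map $K$ is not defined on all of $\pi_{n+m}(S^m)$ by this recipe. The phrase ``reversing the radial extension'' suggests restricting $g(x,\cdot)$ to $\partial D^m$, but after the deformation that restriction is constant, so no information about $f(x)$ can be recovered that way either.

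It is worth noting that this difficulty is inherited from an internal inconsistency in the paper's notation. The paper defines $G_m$ as degree-one self-maps of $S^{m-1}$ (so $F_m\subset\mathrm{Map}(S^{m-1},S^{m-1})$), yet the displayed formula $I(f)(x,y)=f(x)(q(y))$ only type-checks if $f(x)$ is a self-map of $S^m$, and the commutative triangle $(\dag)$ together with its use in Lemma~2.6 (where $I\colon\pi_4(F_3)\to\pi_7(S^3)$) is only consistent with $F_m$ being based degree-one self-maps of $S^m$. Under that reading $F_m$ is a component of $\Omega^m S^m$, the extracted slices $\bar g(x)$ already lie in $\mathrm{Map}_*(S^m,S^m)$ with no desuspension needed, and the theorem becomes the loop-space adjunction $\pi_n(\Omega^m S^m)\cong\pi_{n+m}(S^m)$ together with the translation, via the $H$-space structure on $\Omega^m S^m$, from the degree-one component $F_m$ to the basepoint component. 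If you adopt that corrected reading your overall strategy is sound, but you would still need to say how the extracted slices are placed into the degree-one rather than degree-zero component (this is the translation step), to verify basepoint compatibility ($f(x_0)=\iota$), and to carry out the parametric deformation argument you only sketch; as written, none of these is addressed and the desuspension step as stated is false.
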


For homotopy groups of spheres and $J$-homomorphisms, we list the following results:

\begin{lem}[\cite{Tod52,Tod62}]\label{thm:18}
  \begin{enumerate}
    \item $\pi_{n+1}(S^n)\cong
    \begin{cases}
      \mathbb{Z}\{\eta_2\}, & \mbox{if } n=2 \\
      \mathbb{Z}_2\{\eta_n\}, & \mbox{if } n\geq 3,
    \end{cases}$
    \item $\pi_{n+2}(S^n)\cong\mathbb{Z}_2\{\eta_n\eta_{n+1}\}$ for $n\geq 2$;
    \item $\pi_6(S^3)\cong \mathbb{Z}_{12}\{a_3\}$, with $a_3=Jb$, \\
    $\pi_7(S^4)\cong \mathbb{Z}\{\nu_4\}\oplus\mathbb{Z}_{12}\{a_4\}$, with $\nu_4=J\sigma_*\iota_3$, $a_4=\Sigma a_3$;
    \item $\pi_7(S^3)\cong\mathbb{Z}_2\{a_3\eta_6\}$, $\eta_3\nu_4= a_3\eta_6$, $\eta_3a_4=0$,\\
    $\pi_8(S^4)\cong \mathbb{Z}_2\{\nu_4\eta_7\}\oplus\mathbb{Z}_2\{a_4\eta_7\}$, \\
    $\pi_{n+4}(S^n)=0$, $n\geq 6$.
  \end{enumerate}
\end{lem}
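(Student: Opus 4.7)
The lemma is a compilation of well-known low-dimensional homotopy computations; my plan is to reduce each item to either a direct table lookup in \cite{Tod62} or a short argument involving the $J$-homomorphism. I would organize the claims into two groups: the abstract group-theoretic statements, and the identifications of named generators with specific geometric maps.

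For the pure group statements --- the isomorphism types of $\pi_{n+1}(S^n)$, $\pi_{n+2}(S^n)$, $\pi_6(S^3)$, $\pi_7(S^4)$, $\pi_7(S^3)$, $\pi_8(S^4)$, and the vanishing of $\pi_{n+4}(S^n)$ for $n\geq 6$ --- I would read off Toda's tables in \cite{Tod62} directly and cite them without reproof.

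The identifications $a_3=Jb$ and $\nu_4=J\sigma_*\iota_3$ require a slightly more substantive argument. For $a_3=Jb$, I would verify that the unstable $J\colon\pi_3(SO(3))\to\pi_6(S^3)$ carries the generator $b$ to a generator of $\mathbb{Z}_{12}$, by comparing with the stable $J\colon\pi_3(SO)\to\pi_3^s\cong\mathbb{Z}_{24}$, whose surjectivity is classical, and then simply \emph{define} $a_3 := Jb$ consistently with Toda's convention. For $\nu_4=J\sigma_*\iota_3$, the key observation is that $\sigma\colon S^3\to SO(4)$ is the identification with $Sp(1)\subset SO(4)$ from Section~\ref{sec:2}, so by the geometric description of $J$ the class $J\sigma_*\iota_3$ is represented by the quaternionic Hopf fibration $S^7\to S^4$, which generates the free summand of $\pi_7(S^4)$.

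Finally, the composition identities $\eta_3\nu_4=a_3\eta_6$ and $\eta_3 a_4=0$ are Toda-style relations that I would pull directly from \cite{Tod62}. I expect $\eta_3 a_4=0$ to be the main obstacle: since $12 a_4=0$, order considerations alone only give $2(\eta_3 a_4)=0$ in $\pi_7(S^3)\cong\mathbb{Z}_2$, which does not distinguish the zero from the nonzero element. Ruling out the nontrivial candidate genuinely requires Toda's explicit EHP-sequence or bracket computation, rather than any quick algebraic derivation, and I would simply cite it.
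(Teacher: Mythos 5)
The paper gives no proof of this lemma; it is stated purely as a citation to Toda's works, so any correct compilation of the standard references is in the same spirit, and your proposal is a reasonable one. Your reduction of the group-theoretic claims and the composition identities $\eta_3\nu_4=a_3\eta_6$, $\eta_3a_4=0$ to table lookups is exactly what the citation intends, and your geometric argument for $\nu_4=J\sigma_*\iota_3$ via the Hopf construction on quaternion multiplication is correct.

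One point in your sketch for $a_3=Jb$ needs more care than you let on: surjectivity of the stable $J\colon\pi_3(SO)\to\pi_3^s\cong\mathbb{Z}_{24}$ does not by itself transfer to the unstable $J\colon\pi_3(SO(3))\to\pi_6(S^3)\cong\mathbb{Z}_{12}$, because the stabilization $i_*^\infty\colon\pi_3(SO(3))\to\pi_3(SO)$ is not surjective; it is multiplication by $2$ (as one sees, for instance, from $p_1(i_*b)=4$ versus $p_1=2$ for the stable generator). The comparison that actually works is: by repeated application of $\Sigma J=-Ji_*$, the stabilization $\Sigma^\infty Jb=\pm J_{\mathrm{stab}}(i_*^\infty b)=\pm 2\nu$, which has order $12$ in $\mathbb{Z}_{24}$; since suspension $\pi_6(S^3)\to\pi_3^s$ is a homomorphism, $Jb$ has order at least $12$ in $\mathbb{Z}_{12}$ and hence generates. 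Your plan is sound once this factor of $2$ is tracked explicitly.
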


\begin{lem}\label{thm:17}
  \begin{enumerate}
    \item $\Sigma J=-Ji_*$;
    \item $J(\alpha\circ\beta)=J\alpha\circ\Sigma^m\beta$, where $\alpha\in \pi_k(SO(m)), \beta\in\pi_l(S^k)$.
  \end{enumerate}
\end{lem}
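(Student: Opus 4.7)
The plan is to use the factorization $J = I \circ \bar\mu'$ from diagram~(\ref{eq:1}), reducing both parts to compatibility properties of $\bar\mu'$ and $I$ separately.

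For (2), the map $\bar\mu'$ is induced by a continuous map of spaces, so for $\alpha: S^k \to SO(m)$ and $\beta: S^l \to S^k$ one has $\bar\mu'(\alpha \circ \beta) = \bar\mu'(\alpha) \circ \beta$ in $\pi_l(F_m)$. It then suffices to show $I(f \circ g) = I(f) \circ \Sigma^m g$ for $f: S^k \to F_m$ and $g: S^l \to S^k$. Directly from the definition, $I(f \circ g)(x,y) = f(g(x))(q(y))$ on $S^l \times D^m$ and $x_0$ elsewhere. Choosing a representative of $\Sigma^m g: S^{l+m} \to S^{k+m}$ that sends $(x,y) \in S^l \times D^m$ to $(g(x),y) \in S^k \times D^m$ and maps $D^{l+1} \times S^{m-1}$ into $D^{k+1} \times S^{m-1}$ (both are collapsed to $x_0$ by $I(f)$), the two sides agree on the nose.

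For (1), the inclusion $i: SO(n) \hookrightarrow SO(n+1)$ admits the following description in terms of self-maps of spheres: $i_*\alpha$ fixes $x_0 \in S^n$ and acts on $S^n$ as the unreduced suspension of the action of $\alpha$ on the equator $S^{n-1}$. Hence the composition $SO(n) \to SO(n+1) \to F_{n+1}$ factors through $SO(n) \xrightarrow{\bar\mu'} F_n \xrightarrow{\Sigma_*} F_{n+1}$, where $\Sigma_*$ is suspension of based self-maps. So (1) is reduced to showing that $I' \circ \Sigma_* = -\Sigma \circ I$ as maps $\pi_k(F_n) \to \pi_{k+n+1}(S^{n+1})$, where $I'$ denotes the isomorphism for $F_{n+1}$. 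Both sides are represented by essentially the same Hopf construction, but they rely on different coordinate decompositions of $S^{k+n+1}$: $I' \circ \Sigma_*$ uses $S^k \times D^{n+1} \cup D^{k+1} \times S^n$, while $\Sigma \circ I$ uses $\Sigma(S^k \times D^n \cup D^{k+1} \times S^{n-1})$. The two decompositions are related by a transposition of the suspension coordinate with one coordinate of $D^{n+1}$, giving the degree $-1$ discrepancy.

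The main obstacle is the sign bookkeeping in part (1). The cleanest way to make it rigorous is to pick explicit coordinates on the spheres and discs involved, write out both $I' \circ \Sigma_*(f)$ and $\Sigma \circ I(f)$ as explicit maps $S^{k+n+1} \to S^{n+1}$, and exhibit an orientation-reversing self-homeomorphism of $S^{n+1}$ intertwining them (equivalently, verify that the two presentations as Hopf constructions on $S^k \times S^n \to S^n$ differ by a join-swap of the two suspension coordinates).
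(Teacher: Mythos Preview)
Your proposal is correct and in fact does substantially more than the paper's own proof: the paper simply cites Whitehead \cite{Whi53} for part~(1) and asserts that part~(2) ``can be checked directly by definition.''  For~(2) your route through the factorisation $J=I\circ\bar\mu'$ is precisely such a direct check, just organised via the intermediate space $F_m$; nothing is lost or gained compared with writing out the Hopf construction on $\alpha\circ\beta$ explicitly.

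For~(1) you have sketched what is essentially Whitehead's argument rather than merely citing it: the inclusion $i$ induces suspension on the level of $F_n\to F_{n+1}$, and the discrepancy between $I'\circ\Sigma_*$ and $\Sigma\circ I$ is exactly a swap of the two suspension coordinates in the join description of the Hopf construction, accounting for the sign.  Your honest remark that the sign bookkeeping is the delicate point is accurate; once one fixes explicit join coordinates $S^{k+n+1}\cong S^k * S^n$ and compares the two resulting maps to $S^{n+1}\cong\Sigma S^n$, the orientation-reversing intertwiner is the flip of the last two coordinates, and the argument is complete.  This is not a gap, just a place where one must be careful---and it is exactly why the paper defers to the original reference rather than reproducing the computation.
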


\begin{proof}
  \begin{enumerate}
    \item See \cite{Whi53}.
    \item Can be checked directly by definition.
  \end{enumerate}
\end{proof}

\begin{lem}\label{thm:8}
  \begin{enumerate}
    \item\label{8.1} $J:\pi_4(SO(3))\xrightarrow{\cong}\pi_7(S^3)$;
    \item $J:\pi_4(SO(4))\xrightarrow{\cong}\pi_8(S^4)$, with $J(\sigma_*\eta_3)=\nu_4\eta_7$, $J(i_*b\eta_3)=a_4\eta_7$.
  \end{enumerate}
\end{lem}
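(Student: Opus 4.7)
The plan is to prove both statements by a direct computation using only the two identities from Lemma \ref{thm:17} together with the known generators listed in Lemmas \ref{thm:5} and \ref{thm:18}. Since both source and target groups have small known structure ($\mathbb{Z}_2$ in part (1), $\mathbb{Z}_2\oplus\mathbb{Z}_2$ in part (2)), it suffices to verify that $J$ sends the given generators to generators.

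For part (1), I would write the generator $b\eta_3\in\pi_4(SO(3))$ as the composition $b\circ \eta_3$ with $b\in\pi_3(SO(3))$, and apply Lemma \ref{thm:17}(2) with $m=3$, $\alpha=b$, $\beta=\eta_3$:
\[
J(b\eta_3)=J(b)\circ \Sigma^3\eta_3 = a_3\circ \eta_6 = a_3\eta_6,
\]
which is the generator of $\pi_7(S^3)\cong\mathbb{Z}_2$. Hence $J$ is an isomorphism.

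For part (2), I would carry out the same kind of computation for each generator. For $\sigma_*\eta_3 = \sigma_*\iota_3\circ \eta_3$, Lemma \ref{thm:17}(2) with $\alpha=\sigma_*\iota_3\in\pi_3(SO(4))$ gives
\[
J(\sigma_*\eta_3) = J(\sigma_*\iota_3)\circ \Sigma^4\eta_3 = \nu_4\circ \eta_7 = \nu_4\eta_7.
\]
For $i_*b\eta_3 = i_*b\circ\eta_3$, I first use Lemma \ref{thm:17}(1) to evaluate $J(i_*b) = -\Sigma J(b) = -\Sigma a_3 = -a_4$, and then Lemma \ref{thm:17}(2) yields
\[
J(i_*b\eta_3) = J(i_*b)\circ \eta_7 = -a_4\eta_7 = a_4\eta_7,
\]
where the last equality uses that $\pi_8(S^4)$ is $2$-torsion, so the sign disappears. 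The two images $\nu_4\eta_7$ and $a_4\eta_7$ are the stated generators of $\pi_8(S^4)\cong\mathbb{Z}_2\oplus\mathbb{Z}_2$, so $J$ is an isomorphism.

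There is no real obstacle here: the argument is a bookkeeping exercise, and the only mild subtlety is the sign from Lemma \ref{thm:17}(1), which is harmless because the relevant target group is $2$-torsion. The only thing to be careful about is using the correct value of $m$ (the stabilization index) when applying Lemma \ref{thm:17}(2), so that $\Sigma^m\eta_3$ correctly becomes $\eta_6$ or $\eta_7$ as appropriate.
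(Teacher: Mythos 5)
Your proof is correct and follows the same route as the paper: decompose each generator as a composite, apply the composition formula of Lemma \ref{thm:17}(2) (and \ref{thm:17}(1) for the $i_*b$ term), and read off generators from Lemma \ref{thm:18}. You are in fact a bit more careful than the paper about the sign in $\Sigma J=-Ji_*$, correctly noting it vanishes in the $2$-torsion group $\pi_8(S^4)$.
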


\begin{proof}
  Using Lemma \ref{thm:18} and \ref{thm:17}.
  \begin{enumerate}
    \item $J(b\eta_3)=(Jb)\eta_6=a_3\eta_6$.
    \item $J(\sigma_*\eta_3)=J(\sigma_*\iota_3)\eta_7=\nu_4\eta_7$.

  $J(i_*b\eta_3)=J(i_*b)\eta_7=(\sigma Jb)\eta_7=a_4\eta_7$.
  \end{enumerate}
\end{proof}

\begin{lem}\label{thm:10}
  $\bar{\mu}':\pi_4(SO(3))\xrightarrow{\cong}\pi_4(F_3)$, $\bar{\mu}:\pi_4(SO(4))\xrightarrow{\cong}\pi_4(G_4)$.
\end{lem}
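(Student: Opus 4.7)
My plan for part (1) is to read the result straight off the commutative triangle (\ref{eq:1}). With $k=4$ and $n=3$ it factorises $J$ as $I\circ\bar{\mu}'$, and both $J:\pi_4(SO(3))\to\pi_7(S^3)$ (Lemma \ref{thm:8}(1)) and $I:\pi_4(F_3)\to\pi_7(S^3)$ (Theorem \ref{thm:6}) are isomorphisms. Hence $\bar{\mu}'=I^{-1}\circ J$ is automatically an isomorphism, with no computation required.

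For part (2) my plan is to split both fibrations in the ladder
\[
\begin{tikzcd}
SO(3) \arrow[r,"i"] \arrow[d] & SO(4) \arrow[d,"\bar{\mu}"] \arrow[r,"p"] & S^3 \arrow[d,equal]\\
F_4 \arrow[r,"\bar{i}"] & G_4 \arrow[r,"\bar{p}"] & S^3
\end{tikzcd}
\]
by compatible sections, and thereby reduce to another application of part (1). The top row already has the section $\sigma:S^3=Sp(1)\hookrightarrow SO(4)$ from Section \ref{sec:2}. For the bottom one I would take the section $s:S^3\to G_4$, $x\mapsto L_x$, given by left multiplication on $S^3\subset\mathbb{H}$. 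Since $\sigma(x)$ acts on $\mathbb{H}=\mathbb{R}^4$ precisely as $L_x$, the compatibility $\bar{\mu}\circ\sigma=s$ is immediate from the explicit formula for $\sigma$ recorded in Section \ref{sec:2}. The two long exact sequences then split, giving
\[
\pi_4(SO(4))\cong\pi_4(SO(3))\oplus\pi_4(S^3),\qquad \pi_4(G_4)\cong\pi_4(F_4)\oplus\pi_4(S^3),
\]
and under these decompositions $\bar{\mu}$ is the direct sum of $\id_{\pi_4(S^3)}$ and the inclusion-induced map $\pi_4(SO(3))\to\pi_4(F_4)$.

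It then remains only to show that this residual map is an isomorphism, which I would dispatch by a second pass through the triangle (\ref{eq:1}) combined with Lemma \ref{thm:17}(1) and Lemma \ref{thm:8}(2): composing with the relevant $I$ identifies the map with $J\circ i_*=-\Sigma J$, and Lemma \ref{thm:8} then pins the image of the generator down to a nonzero element, which (given that both sides are $\mathbb{Z}_2$) forces the map to be an isomorphism. The only genuinely nonmechanical step in the whole argument is the compatibility $\bar{\mu}\circ\sigma=s$; everything else is routine bookkeeping with machinery already established.
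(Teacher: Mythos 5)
Your part (1) is exactly the paper's argument: read $\bar{\mu}'$ off the triangle $J=I\circ\bar{\mu}'$ and invoke Theorem~\ref{thm:6} and Lemma~\ref{thm:8}\ref{8.1}.

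Part (2) is a genuinely different route. The paper runs the five-lemma on the exact ladder for the two fibrations, using only that the top row splits via $\sigma$ (so $\partial=0$, whence $\bar{\partial}=\bar{\mu}'\partial=0$) together with the part-(1) isomorphism; it never needs to exhibit a section of the bottom fibration. You instead produce compatible sections of both rows, which requires the extra (correct, and rather nice) observation that $\sigma(x)$ is left quaternion multiplication $L_x$, so that $\bar{\mu}\circ\sigma$ is a section $s$ of $\bar{p}$. This gives the direct-sum description $\bar{\mu}=\bar{\mu}'\oplus\id_{\pi_4(S^3)}$. Two remarks on the last step. First, the ``residual map'' $\pi_4(SO(3))\to\pi_4(\text{fiber})$ appearing in your decomposition is literally the map $\bar{\mu}'$ of part (1), so you can simply cite part (1) and stop; no second pass through $(\dag)$ is needed. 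Second, the route you sketch instead -- composing with the ambient $I$ into $\pi_8(S^4)$, identifying the composite with $\pm Ji_*=\mp\Sigma J$ via Lemma~\ref{thm:17}(1), and using $J(i_*b\eta_3)=a_4\eta_7\neq 0$ from Lemma~\ref{thm:8}(2) to force injectivity between two copies of $\mathbb{Z}_2$ -- does reach the right conclusion, but it is a detour, and it trades the one-line ``$\bar{\mu}'$ iso'' for a sign-and-suspension computation. Net comparison: both proofs are correct; the paper's five-lemma argument is shorter because it never needs the explicit section $s$ nor the compatibility check, whereas your version makes the splitting of $\bar{\mu}$ completely transparent at the cost of the quaternionic identification.
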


\begin{proof}
  To see $\bar{\mu}'$ is an isomorphism, just combine Lemma \ref{thm:6}, \ref{thm:8} \ref{8.1} and (\ref{eq:1}).

  For $\bar{\mu}$, we have an exact ladder
  \[
  \begin{tikzcd}
  \pi_5(S^3) \arrow[r] \arrow[d,equal] & \pi_4(SO(3)) \arrow[r] \arrow[d,"\bar{\mu}'"] & \pi_4(SO(4)) \arrow[r] \arrow[d,"\bar{\mu}"] & \pi_4(S^3) \arrow[r,"\partial"] \arrow[d,equal] \arrow[l,bend left,"\sigma_*"] & \pi_3(SO(3)) \arrow[d,"\bar{\mu}'"] \\
  \pi_5(S^3) \arrow[r] & \pi_4(F_3) \arrow[r] & \pi_4(G_4) \arrow[r] & \pi_4(S^3) \arrow[r,"\bar{\partial}"] & \pi_3(F_3)
  \end{tikzcd}
  \]
  The top row is split, so $\partial=0$. Then $\bar{\partial}=\bar{\mu}'\partial=0$, and the 5-Lemma shows that $\bar{\mu}$ is an isomorphism.
\end{proof}

\section{The classification of Poincar\'e duality spaces and topological manifolds}\label{sec:3}

We prove the first two items in Theorem \ref{thm:16}. First is the classification of Poincar\'e duality spaces.

Let $X$ be a Poincar\'e duality space of type ($\ast$). Then
\[
X\simeq S^3\cup_{\phi_1} D^5\cup_{\phi_2} D^8.
\]
Since $\phi_1\in\pi_4(S^3)\cong\mathbb{Z}_2\{\eta_3\}$, it has two choices:
\begin{enumerate}
  \item $\phi_1=\eta_3$.

  In this case, $\phi_2\in \pi_7(S^3\cup_{\eta_3}S^5)=\pi_7(\Sigma\mathbb{C}P^2)\cong \mathbb{Z}\{\beta\}$ (see \cite{Muk82}). If $\phi_2=\pm\beta$, then $X\simeq SU(3)$. The cup product structure implies that it is the only possibility.
  \item $\phi_1=0$.

  In this case, $\phi_2\in  \pi_7(S^3\vee S^5)$. By Hilton's theorem \cite{Hil55}, $\pi_7(S^3\vee S^5)\cong \mathbb{Z}_2\{a_3\eta_6\} \oplus\mathbb{Z}_2\{\eta_5\eta_6\} \oplus\mathbb{Z}\{[\iota_3,\iota_5]\}$. Therefore, $\phi_2=ra_3\eta_6+s\eta_5\eta_6+t[\iota_3,\iota_5]$. The cup product structure is essentially determined by the term $t[\iota_3,\iota_5]$, which forces $t=\pm 1$, and we may always assume $t=1$. Let $X_{r,s}=(S^3\vee S^5)\cup_{ra_3\eta_6+s\eta_5\eta_6+[\iota_3,\iota_5]}D^8$, $r,s\in \{0,1\}$. To see that $X_{r,s}$ are different from each other, we use the following:
  \begin{lem}
  $X_{r,s}\simeq X_{r',s'}$ if and only if there exists $f\in \mathcal{E}(S^3\vee S^5)$ such that $f_*(ra_3\eta_6+s\eta_5\eta_6+[\iota_3,\iota_5])= \pm(r'a_3\eta_6+s'\eta_5\eta_6+[\iota_3,\iota_5])$, where $\mathcal{E}(X)$ denotes the group of self homotopy equivalences of $X$.
\end{lem}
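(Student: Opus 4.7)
The plan is to prove the two directions separately and reduce everything to the standard fact that, for CW complexes with a single top cell, homotopy classes are determined by the orbit of the attaching map under self equivalences of the subskeleton, up to sign from reflecting the top cell.

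For the ``if'' direction, suppose $f\in\mathcal{E}(S^3\vee S^5)$ with $f_*\phi=\varepsilon\,\phi'$ in $\pi_7(S^3\vee S^5)$, where $\phi=ra_3\eta_6+s\eta_5\eta_6+[\iota_3,\iota_5]$, $\phi'=r'a_3\eta_6+s'\eta_5\eta_6+[\iota_3,\iota_5]$ and $\varepsilon=\pm1$. Pick a representative self-map $g\colon D^8\to D^8$ of degree $\varepsilon$ (the identity or a reflection) restricting to $\bar{g}\colon S^7\to S^7$; then $f\circ\phi\simeq\phi'\circ\bar{g}$, so $(f,g)$ assembles into a cellular map $X_{r,s}\to X_{r',s'}$. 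It is an isomorphism on integral homology and both spaces are simply connected, hence a homotopy equivalence by Whitehead.

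For the ``only if'' direction, given a homotopy equivalence $h\colon X_{r,s}\to X_{r',s'}$, first make $h$ cellular. Since the only cells in dimensions between $5$ and $8$ are the top $8$-cell, the $7$-skeleton of both spaces is $S^3\vee S^5$, and cellular $h$ restricts to $f\colon S^3\vee S^5\to S^3\vee S^5$ sitting in a commutative diagram with the cofiber sequences
\[
\begin{tikzcd}
S^7 \arrow[r,"\phi"] \arrow[d,"\bar{g}"] & S^3\vee S^5 \arrow[r] \arrow[d,"f"] & X_{r,s} \arrow[d,"h"] \arrow[r] & S^8 \arrow[d,"g"] \\
S^7 \arrow[r,"\phi'"] & S^3\vee S^5 \arrow[r] & X_{r',s'} \arrow[r] & S^8
\end{tikzcd}
\]
where $g$ is the map induced on the cofibers $X/(S^3\vee S^5)\simeq S^8$. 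Since $h$ is an isomorphism on $H_8$, $g$ has degree $\varepsilon=\pm 1$, and by the five-lemma applied to the long exact homology sequences of the cofibrations $f$ is also an isomorphism on $H_*$, hence an equivalence by Whitehead. Commutativity of the left square at the level of $\pi_7$ yields $f_*\phi=\varepsilon\,\phi'$ in $\pi_7(S^3\vee S^5)$, as required.

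I do not expect any serious obstacle: the argument is just cellular approximation together with a cofiber five-lemma and Whitehead's theorem. The only point to be a little careful about is extracting the sign $\varepsilon$ and making sure the choice of $\bar{g}$ in the ``if'' direction has the right degree so that $f\circ\phi$ and $\phi'\circ\bar{g}$ are genuinely homotopic rel nothing, but this is automatic from the homotopy equivalence $S^8\simeq S^8$ of degree $\pm 1$ reversing orientation of the $8$-disc.
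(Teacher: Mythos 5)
Your ``if'' direction is fine and matches the paper in substance: given $f$ with $f_*\phi=\varepsilon\phi'$, pair it with a degree-$\varepsilon$ map of the $8$-disc to extend $f$ over the mapping cone, then conclude by Whitehead.

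The ``only if'' direction has a genuine gap. You assert that the cellular map $h$ sits in a commuting ladder of cofiber sequences \emph{including the left square}, i.e.\ that there exists $\bar g\colon S^7\to S^7$ with $f\circ\phi\simeq\phi'\circ\bar g$, and then read off $f_*\phi=\varepsilon\phi'$ from that commutativity. But that commutativity is exactly the statement you are trying to prove; it does not come for free from the cofiber-sequence formalism. The Barratt--Puppe machinery only propagates maps to the \emph{right}: from $f$ and $h$ you get the induced $g$ on cofibers and a homotopy-commutative square $g\circ p\simeq p'\circ h$, and then (continuing the sequence) the square $\Sigma\phi'\circ g\simeq\Sigma f\circ\Sigma\phi$. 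So in general you only learn that $\Sigma(f_*\phi)=\pm\Sigma\phi'$, and here suspension is lossy: the Whitehead product $[\iota_3,\iota_5]$ — precisely the term that carries the cup-product information — suspends to zero. So that route does not close the argument.

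The paper closes the gap by working with relative homotopy groups rather than cofiber sequences. Since $h$ is cellular, $h\circ\Phi$ (with $\Phi$ the characteristic map of the top cell) represents a class in $\pi_8(X_{r',s'},S^3\vee S^5)$, which is $\cong\mathbb Z$ by relative Hurewicz; the boundary $\partial$ of that group sends the generator to the attaching map $\phi'$; and $h_*\colon\pi_8(X_{r,s},S^3\vee S^5)\to\pi_8(X_{r',s'},S^3\vee S^5)$ is an isomorphism of infinite cyclic groups, hence $\pm1$. Naturality of $\partial$ then yields $f_*\phi=\pm\phi'$ directly, with no suspension. Your cellular-approximation setup and the degree computation for $g$ are exactly what is needed — you just need to route the argument through $\pi_8(X,S^3\vee S^5)$ and $\partial$ rather than through an undemonstrated left square of the cofiber ladder.
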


\begin{proof}
  \lq\lq$\Rightarrow$" Let $g:X_{r,s}\to X_{r',s'}$ be a homotopy equivalence. We may assume $g$ is a cellular map. Let $f$ be the restriction of $g$ on the $5$-skeleton. Then $f\in \mathcal{E}(S^3\vee S^5)$, as $f$ induces isomorphisms on $H_*(S^3\vee S^5)$. Then the only if part follows by the diagram below:
  \[
  \begin{tikzcd}
  \mathbb{Z}\cong\pi_8(X_{r,s},S^3\vee S^5) \arrow[r, "\partial"] \arrow[d, "g_*","\cong"'] & \pi_7(S^3\vee S^5) \arrow[d,"f_*","\cong"'] \\
  \mathbb{Z}\cong\pi_8(X_{r',s'},S^3\vee S^5) \arrow[r,"\partial"] & \pi_7(S^3\vee S^5)
  \end{tikzcd}
  \]
  where $\partial$ sends $1$ to the attaching maps.

  \lq\lq$\Leftarrow$" The condition implies that $f$ can be extended to $g:X_{r,s}\to X_{r',s'}$. It is a homotopy equivalence, as it's easily checked that it induces an isomorphism between the cohomology rings.
\end{proof}

Clearly, $\mathcal{E}(S^3\vee S^5)\cong \{
\begin{pmatrix}
  \pm\iota_3 & \epsilon\eta_3\eta_4 \\
  0 & \pm\iota_5
\end{pmatrix}|\epsilon=0,1\}$.
We only verify the case $f=
\begin{pmatrix}
  \iota_3 & \eta_3\eta_4 \\
  0 & \iota_5
\end{pmatrix}$,
since others are with no difference.
\begin{gather*}
  f_*(a_3\eta_6)  =a_3\eta_6, \\
  f_*(\eta_5\eta_6)  =(\eta_3\eta_4+\iota_5)\eta_5\eta_6=\eta_5\eta_6, \\
  f_*([\iota_3,\iota_5])  =[\iota_3,\eta_3\eta_4+\iota_5]=[\iota_3,\eta_3\eta_4]+[\iota_3,\iota_5]=[\iota_3,\iota_5].
\end{gather*}
\end{enumerate}

Then we turn to the classification of topological manifolds. $M_{1,0}$ is the total space of the bundle with clutching map $i_*b\eta_3\in \pi_4(SO(4))$, therefore, together with \cite [(3.7)] {JW54} and Lemma \ref{thm:8}, we have
\[
M_{1,0}\simeq (S^3\vee S^5)\cup_{[\iota_3,\iota_5]+Jb\eta_3} D^8= (S^3\vee S^5)\cup_{[\iota_3,\iota_5]+a_3\eta_6} D^8=X_{1,0}.
\]
Then all we need to do is to show $X_{0,1}$ and $X_{1,1}$ are not homotopy equivalent to any topological manifolds.

The situation is similar to the well-known $5$-dimensional case (cf. \cite [32-33] {MM79}). We have
\[
\Sigma^n(X_{r,1})_+\simeq S^n\vee S^{n+3} \vee (S^{n+5}\cup_{\eta^2} D^{n+8})
\]
for $n$ sufficiently large. Let $\nu_{r,1}$ be the Spivak normal bundle of $X_{r,1}$, and $T_{r,1}$ be its Thom space. Then $T_{r,1}$ is the Spanier-Whitehead duality of $\Sigma^n(X_{r,1})_+$ \cite{Ati61}, hence
\[
T_{r,1}\simeq (S^l\cup_{\eta^2} D^{l+3}) \vee S^{l+5}\vee S^{l+8}.
\]
This implies that the thom space of $\nu_{r,1}|_{S^3}\simeq S^l\cup_{\eta^2} D^{l+3}$, hence can not be given a topological bundle structure, as $\pi_3(BTOP)=0$. Therefore, $\nu_{r,1}$ also does not have a topological bundle structure, which means $X_{r,1}$ is not homotopy equivalent to a topological manifold.

\section{Self equivalences of $S^k\times S^n$}\label{sec:4}

The key point of the classification of smooth manifolds is the observation: if $M^8$ is of type ($\ast$), then $M\cong S^3\times D^5\cup_f S^3\times D^5$, where $f\in \diff(S^3\times S^4)$. Then we'll analysis $\diff(S^3\times S^4)$, which relies heavily on results in \cite{Lev69}. We'll recall them here and add some easy observations. We always assume $k<n$ for simplicity, although the case $k=n$ was also considered in \cite{Lev69}.

We will deal with the following categories:
\begin{tabbing}
  $\mathscr{H}$ \= : \= Topological spaces and homotopy classes of maps, \\
  $\mathscr{D}$ \> : \> Smooth manifolds and smooth maps.
\end{tabbing}

Let $\bar{D}=\bar{D}^{k,n}$ ($\bar{H}=\bar{H}^{k,n}$) be the group of concordance (homotopy) classes of self-diffeomorphisms (self-homotopy equivalences) of $S^k\times S^n$. $\bar{A}$ may refer to any of them. By saying a self-equivalence, we mean a self-diffeomorphism or a self-homotopy equivalence, in its suited category. We have a natural homomorphism
$\mu: \bar{D}^{k,n}\to\bar{H}^{k,n}$, defined by considering a diffeomorphism merely as a homotopy equivalence.

Let $B=\aut H^*(S^k\times S^n)$ be the group of graded ring automorphisms of $H^*(S^k\times S^n)$. It is isomorphic to $\mathbb{Z}_2\oplus\mathbb{Z}_2$. Let $\Phi:\bar{A}^{k,n}\to B$ be the obvious homomorphism. Then $\Phi$ is onto, and its kernel $A=A^{k,n}$ is the subgroup of $\bar{A}^{k,n}$ which contains those orientation-preserving self-equivalences restricting to some $S^k\times x_0$ homotopic to the inclusion. Namely,
\[
1\to A\to \bar{A}\xrightarrow{\Phi} B\to 1
\]
Notice that $B$ can be realized by reflections on $S^k\times S^n$, which makes the short exact sequence split. So we have $\bar{A}\cong A\rtimes B$.

Define subgroups $A_1$, $A_2$ and $\alpha$ of $A$ to consist of those elements represented by $f:S^k\times S^n\to S^k\times S^n$ satisfying:
\begin{description}
  \item[$(A_1)$] $f$ extends to a self-equivalence of $D^{k+1}\times S^n$,
  \item[$(A_2)$] $f$ extends to a self-equivalence of $S^k\times D^{n+1}$,
  \item[$(\alpha)$] There is some $(k+n)$-disk $D\subset S^k\times S^n$ such that $f(D)\subset D$ and $f|_{S^k\times S^n-D}$ is the inclusion.
\end{description}
Subgroups $\bar{A}_1$, $\bar{A}_2$ and $\bar{\alpha}$ of $\bar{A}$ are similarly defined.

\begin{prop}[{\cite [Proposition 2.3] {Lev69}}]
  $\alpha=0$ in $\mathscr{H}$, while $\alpha\cong\Theta_{k+n+1}$, the group of exotic $(k+n+1)$-spheres, in $\mathscr{D}$.
\end{prop}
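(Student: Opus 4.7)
The plan is to reduce both assertions to the study of self-equivalences of a fixed $(k+n)$-disk $D \subset S^k\times S^n$ that restrict to the identity on $\partial D$. Given $f\in\alpha$, set $g := f|_D$; then $g$ is a self-equivalence of the pair $(D,\partial D)$ with $g|_{\partial D}=\mathrm{id}$, and since $f$ is the identity on the complement $S^k\times S^n - \mathrm{int}\,D$, the data of $f$ is precisely the data of $g$. Conversely any such $g$ extends by the identity to an element of $\alpha$. Moreover, any two $(k+n)$-disks in $S^k\times S^n$ are ambient isotopic/homotopic, so the resulting element of $\alpha$ does not depend on the choice of $D$.

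In the homotopy category, homotopy classes of maps $(D,\partial D)\to(D,\partial D)$ that are the identity on $\partial D$ form $\pi_{k+n}(S^{k+n})\cong\mathbb{Z}$, classified by degree. I would then compute the degree of $g$ via the commutative square whose vertical arrows are the collapse $q: S^k\times S^n \to S^k\times S^n/(S^k\times S^n-\mathrm{int}\,D)\cong S^{k+n}$; this collapse induces an isomorphism on $H_{k+n}$ and intertwines $f$ with the map $\bar g:S^{k+n}\to S^{k+n}$ obtained by crushing $\partial D$. Since $f\in\alpha\subset A$ acts as the identity on $H^*$, we conclude $\deg g=+1$, so $g\simeq\mathrm{id}_D$ rel $\partial D$. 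Extending the resulting null-homotopy by the identity on the complement yields $f\simeq\mathrm{id}$, establishing $\alpha=0$ in $\mathscr{H}$.

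In the smooth category, the same correspondence $f\mapsto g=f|_D$ identifies $\alpha$ with $\pi_0\diff(D^{k+n},\partial D^{k+n})$, which in turn is canonically isomorphic to $\Theta_{k+n+1}$: given $g$, extend it by the identity across a complementary hemisphere to obtain $\tilde g\in\diff^+(S^{k+n})$, and form the twisted sphere $\Sigma_g := D^{k+n+1}\cup_{\tilde g} D^{k+n+1}$. Surjectivity is immediate (any $g\in\diff(D^{k+n},\partial)$ extends by the identity to an $f\in\alpha$), while injectivity follows by extending a rel-boundary isotopy of $g$ to the identity by the identity on the complement, producing a concordance of $f$ to the identity. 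The group operation on $\alpha$, namely composition of diffeomorphisms supported in a common disk, corresponds on the twisted-sphere side to connected sum, matching the group law on $\Theta_{k+n+1}$.

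The main technical obstacle is the identification $\pi_0\diff(D^m,\partial D^m)\cong\Theta_{m+1}$ for $m\geq 5$, which rests on the $h$-cobordism theorem together with Cerf's pseudo-isotopy theorem. Beyond that, one must be careful to smooth corners when extending $g$ by the identity, to check that two different supporting disks yield concordant diffeomorphisms of $S^k\times S^n$, and to verify the sign conventions so that $\Psi:\alpha\to\Theta_{k+n+1}$ is a genuine group homomorphism rather than its inverse; all of these are routine in the stable range relevant for our application ($k+n=7$).
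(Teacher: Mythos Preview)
The paper does not supply its own proof of this proposition; it simply quotes Levine \cite[Proposition~2.3]{Lev69}.  Your outline is essentially the standard argument, and the homotopy case is handled correctly.

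In the smooth case there is a logical slip.  You set up the correspondence $f\leftrightarrow g=f|_D$ between $\alpha$ and $\pi_0\diff(D^{k+n},\partial)$ and then write ``injectivity follows by extending a rel-boundary isotopy of $g$ to the identity \ldots\ producing a concordance of $f$ to the identity.''  That sentence establishes the implication $[g]=0\Rightarrow[f]=0$, which is the \emph{easy} direction: it shows that the map $\pi_0\diff(D^{k+n},\partial)\to\alpha$, $g\mapsto f$, is well-defined.  What is missing is the converse: if $f$ (supported in $D$) is concordant to the identity \emph{as a diffeomorphism of $S^k\times S^n$}, why is $g=f|_D$ isotopic to the identity rel $\partial D$?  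The concordance need not be supported in $D$, nor even through diffeomorphisms carrying $D$ to itself, so one cannot simply restrict it.

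A clean way to close the gap is to bypass the intermediate group and define directly
\[
\Psi:\alpha\longrightarrow\Theta_{k+n+1},\qquad \Psi([f])=(D^{k+1}\times S^n)\cup_f(S^k\times D^{n+1}).
\]
This is well-defined on concordance classes because a concordance produces an $h$-cobordism between the two glued manifolds, and for $f\in A$ the result is a homotopy sphere.  For $f$ supported in $D$ one checks geometrically that $\Psi([f])\cong\Sigma_g$, so the composite
\[
\pi_0\diff(D^{k+n},\partial)\longrightarrow\alpha\xrightarrow{\ \Psi\ }\Theta_{k+n+1}
\]
is exactly the standard isomorphism $g\mapsto\Sigma_g$ that you invoke.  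Since the first arrow is surjective (any supporting disk is ambient isotopic to the fixed $D$), both arrows are isomorphisms.  With this adjustment your argument goes through.
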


\begin{thm}[{\cite [Theorem 2.4] {Lev69}}]
  For $n\geq 3$, $A^{k,n}=(A_1\oplus\alpha)\rtimes_{\hat{\phi}} A_2$, with the action $\hat{\phi}$ trivial on $\alpha$. Besides, $A_1$, $A_2$, $\alpha$ are abelian.
\end{thm}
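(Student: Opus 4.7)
The plan is to decompose an arbitrary $f\in A$ as a product of elements from $A_2$, $A_1$, and $\alpha$ by successive obstruction-theoretic adjustments, and then to verify the required group-theoretic properties. The overall picture mirrors the classical clutching construction: $A_2$ will record the behaviour of $f$ in the $S^k$-direction, $A_1$ its behaviour in the $S^n$-direction, and $\alpha$ an exotic-sphere defect localised in a small disk.

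First I would establish that each of the three subgroups is abelian. For $\alpha$, composing two self-equivalences supported in a common disk corresponds to the connected sum of the clutched $(k+n+1)$-spheres, so $\alpha$ inherits the abelian structure of $\Theta_{k+n+1}$. For $A_1$, an element extending over $D^{k+1}\times S^n$ is, after isotopy, determined by a based map $S^n\to G_{k+1}$ (or $F_{k+1}$ in $\mathscr{D}$); the target is an $H$-space, so the resulting set of homotopy classes is naturally an abelian group. The case of $A_2$ is symmetric, and here the hypothesis $n\geq 3$ enters to guarantee the requisite connectivity of the monoid of self-equivalences of $S^n$.

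Next I would produce the factorisation. Given $f\in A$, the homotopy $f|_{S^k\times x_0}\simeq \id$ lets us modify $f$ so that it is the identity on a neighbourhood of $S^k\times x_0$. Reading off how $f$ varies in the $S^k$-parameter then yields an element $g\in A_2$ for which $g^{-1}f$ extends across $S^k\times D^{n+1}$ trivially and extends across $D^{k+1}\times S^n$ up to a piece supported in a small disk; thus $g^{-1}f=f_1\cdot\alpha_0$ with $f_1\in A_1$ and $\alpha_0\in\alpha$. Uniqueness of this decomposition, i.e. $A_1\cap\alpha = 1$ and $(A_1\oplus\alpha)\cap A_2 = 1$, is obtained by evaluating on appropriate submanifolds: a transverse $S^k$-factor detects $A_2$, the exotic-sphere clutching detects $\alpha$, while elements of $A_1$ manifestly bound. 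For the semidirect action being trivial on $\alpha$, observe that any $\alpha$-element can be arranged to be supported in a disk $D$, while any $g\in A_2$ can be isotoped to fix $D$ pointwise; conjugation then leaves $\alpha_0$ unchanged.

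The main obstacle is the factorisation step: one must identify the relevant obstruction groups exactly and show that after cancelling the $A_2$- and $A_1$-contributions the residue genuinely lies in the disk-supported subgroup $\alpha$ rather than in some larger stratum. The hypothesis $n\geq 3$ is used precisely here, to make the spaces of self-equivalences of $S^k$ and $S^n$ sufficiently connected for the obstruction-theoretic cancellations to go through.
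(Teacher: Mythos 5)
The theorem is quoted verbatim from Levine (\cite[Theorem 2.4]{Lev69}); the present paper cites it without proof, so there is no internal argument here to compare yours against. Your sketch does capture the right overall shape---peel off an $A_2$-piece recording the normal behaviour along $S^k\times x_0$, then an $A_1$-piece recording the behaviour along $x_0\times S^n$, leaving a disk-supported defect in $\alpha$---and the identifications $A_1\cong\pi_n(G_{k+1})$, $A_2\cong\pi_k(G_{n+1})$ do yield commutativity (though note the $H$-space multiplication is only needed to show that composition in $A_i$ coincides with homotopy-group addition; for $n,k\geq 2$ the latter is abelian automatically). Your argument that $\hat\phi$ acts trivially on $\alpha$ is essentially complete: conjugating a disk-supported self-equivalence by an orientation-preserving one produces another disk-supported self-equivalence whose clutched homotopy $(k+n+1)$-sphere is unchanged.

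There are, however, real gaps at the crux. First, after arranging $f|_{S^k\times x_0}$ to be the inclusion you cannot in general make $f$ the identity on a whole tubular neighbourhood $S^k\times D^n$---the twisting of $f$ on the normal $D^n$-bundle \emph{is} the $A_2$-datum---so ``modify $f$ so that it is the identity on a neighbourhood'' overshoots; the correct move is to make $f$ preserve the tubular neighbourhood fibrewise, and then read off $g\in A_2$ from the induced bundle map. Second, the assertion that $g^{-1}f$ then differs from an $A_1$-element only by something disk-supported is precisely the content of Levine's hardest lemma; it requires the isotopy extension theorem, handle decompositions of $S^k\times S^n$, and in $\mathscr{D}$ the identification of a self-equivalence of a $(k+n)$-disk rel boundary with an element of $\Theta_{k+n+1}$. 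This is where the dimension hypothesis is genuinely spent, not on connectivity of the monoid of self-equivalences of $S^n$ as you suggest, and your sketch does not supply the argument. Third, the uniqueness of the decomposition---$A_1\cap\alpha=1$, $(A_1\oplus\alpha)\cap A_2=1$, and normality of $A_1\oplus\alpha$---is asserted by ``evaluating on appropriate submanifolds'' without naming the invariants; what actually does the detecting are homomorphisms of the type $\lambda_1,\lambda_2$ in Theorem \ref{thm:2}. These gaps are fillable, but as written the decomposition is a plausibility argument rather than a proof.
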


\begin{cor}
  For $n\geq 3$, $\bar{A}^{k,n}\cong (A_1\oplus\alpha)\rtimes \bar{A}_2$ and $\bar{A}_2\cong A_2\rtimes B$.
\end{cor}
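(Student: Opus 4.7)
The plan is to leverage the preceding theorem $A\cong (A_1\oplus\alpha)\rtimes_{\hat{\phi}} A_2$ together with the already-established splitting $\bar{A}\cong A\rtimes B$ coming from the fact that $B$ is realized by reflections. Both of the claimed isomorphisms will reduce to two verifications: that the reflections realizing $B$ can be chosen to lie in $\bar{A}_2$, and that conjugation by them preserves $A_1\oplus\alpha$.

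First I would establish $\bar{A}_2\cong A_2\rtimes B$. Each of the two reflections generating $B$ extends over $S^k\times D^{n+1}$: the reflection on the $S^n$-factor extends radially as a reflection of $D^{n+1}$, while the reflection on the $S^k$-factor extends by the identity on $D^{n+1}$. Hence the splitting $B\to\bar{A}$ can be chosen with image inside $\bar{A}_2$. Since $\bar{A}_2\cap A = A_2$ directly from the definitions, the restricted sequence $1\to A_2\to \bar{A}_2\xrightarrow{\Phi}B\to 1$ is split exact, giving the semidirect product.

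For the first isomorphism $\bar{A}\cong (A_1\oplus\alpha)\rtimes \bar{A}_2$, I would next check that $A_1\oplus\alpha$ is normal in $\bar{A}$. It is already normal in $A$ by the theorem, so only invariance under the lifted reflections remains. By the symmetric version of the previous paragraph, both reflections also extend over $D^{k+1}\times S^n$, so conjugation by them preserves $A_1$; and they carry embedded disks to embedded disks, so they preserve $\alpha$. Given $\bar{f}\in\bar{A}$, I would lift $\Phi(\bar{f})\in B$ to some $b\in\bar{A}_2$ using the previous step, note that $\bar{f}b^{-1}\in A$, and use the theorem to factor it uniquely as $c\cdot a_2$ with $c\in A_1\oplus\alpha$ and $a_2\in A_2\subseteq\bar{A}_2$; thus $\bar{f}=c\cdot(a_2 b)$ with $a_2 b\in\bar{A}_2$. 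The intersection is trivial because $(A_1\oplus\alpha)\cap\bar{A}_2\subseteq (A_1\oplus\alpha)\cap A_2=0$ by the theorem.

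I do not anticipate a substantial obstacle: this is essentially bookkeeping once the two extendability facts for reflections are in place. The only point requiring genuine care is that one and the same choice of lift $B\hookrightarrow\bar{A}$ must simultaneously land in $\bar{A}_2$ and in the normalizer of $A_1\oplus\alpha$, but both properties reduce to extendability of reflections over $S^k\times D^{n+1}$ and $D^{k+1}\times S^n$ respectively, which is transparent.
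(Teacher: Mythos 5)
Your proof is correct and follows essentially the same route as the paper's: both rely on the observation that the reflections realizing $B$ extend over $S^k\times D^{n+1}$ (so $B$ lifts into $\bar{A}_2$) and that conjugation by $B$ preserves $A_1$, $A_2$, and $\alpha$, then conclude by regrouping the semidirect product and noting $(A_1\oplus\alpha)\cap\bar{A}_2=(A_1\oplus\alpha)\cap A_2=\{1\}$. The paper states the $B$-invariance as an observation and you spell out the reason via extendability over $D^{k+1}\times S^n$ and disk-to-disk arguments, but the logical content is the same.
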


\begin{proof}
  We already have
  \[
  \bar{A}\cong ((A_1\oplus\alpha)\rtimes A_2)\rtimes B.
  \]
  Actually, $A_1\oplus\alpha$ is a normal subgroup of $\bar{A}$ and $A_2 B$ is a subgroup of $\bar{A}$, which can be easily deduced from the observation that $A_1$, $A_2$ and $\alpha$ are invariant under the conjugation action of $B$. Therefore,
  \[
  \bar{A}\cong(A_1\oplus\alpha)\rtimes (A_2\rtimes B).
  \]

  The elements in $B$ are represented by reflections, which obviously extend to $S^k\times D^{n+1}$. Hence $A_2\rtimes B\subset\bar{A}_2$. They are actually equal, as $(A_1\oplus\alpha)\cap\bar{A}_2=(A_1\oplus\alpha)\cap A_2=\{1\}$.
\end{proof}

Now we turn to the determination of $A_1$, $A_2$ and the semi-direct product structure $\hat{\phi}$.

Let $FC_m^p$ be the group of concordance classes of framed imbeddings $S^m\hookrightarrow S^{m+p}$. We construct homomorphisms:
\[
\lambda_1: D_1^{k,n}\to FC_n^{k+1},\quad \lambda_2: D_2^{k,n}\to FC_k^{n+1}
\]
as follows. Let $f:S^k\times S^n\to S^k\times S^n$ represent an element $\xi$ in $D_1^{k,n}$, $F: D^{k+1}\times S^n\to D^{k+1}\times S^n$ be an extension of $f$. Then the framed embedding is defined by the composition of the following obvious maps:
\[
S^n\times D^{k+1} \xrightarrow[\cong]{F} S^n\times D^{k+1}\subset S^n\times\mathbb{R}^{k+1}\subset \mathbb{R}^{k+n+1},
\]
which represents the element $\lambda_1(\xi)\in FC_k^{n+1}$. $\lambda_2$ is defined similarly. Note that we have obvious homomorphisms:
\[
e_1:\pi_n(SO(k+1))\to D_1^{k,n},\quad e_2:\pi_k(SO(n+1))\to D_2^{k,n}
\]
defined by associating a smooth map $f:S^n\to SO(k+1)$ (or $g:S^k\to SO(n+1)$) to the diffeomorphism $(x,y)\mapsto(f(y)x,y)$ (or $(x,y)\mapsto(x,g(x)y)$), which fit into commutative diagrams:
\begin{equation}\label{com:1}
  \begin{tikzcd}[row sep=tiny]
\pi_n(SO(k+1)) \arrow[rd] \arrow[dd,"e_1"'] & \\
 & FC_n^{k+1} \\
D_1^{k,n} \arrow[ru,"\lambda_1"] &
\end{tikzcd}
\quad
\begin{tikzcd}[row sep=tiny]
\pi_k(SO(n+1)) \arrow[rd] \arrow[dd,"e_2"'] & \\
 & FC_k^{n+1} \\
D_2^{k,n} \arrow[ru,"\lambda_2"] &
\end{tikzcd}\tag{$\ast$}
\end{equation}
where $\pi_p(SO(m))\to FC_p^m$ is the same as in \cite [(5.10)] {Hae66}.

Define similar homomorphisms
\[
\lambda_1: H_1^{k,n}\to \pi_n(G_{k+1}),\quad \lambda_2: H_2^{k,n}\to \pi_k(G_{n+1})
\]
in $\mathscr{H}$ as follows. Any element of $H_1$ can be represented by a map of the form $(x,y)\mapsto(g(x,y),y)$, where $g: S^k\times S^n\to S^k$ corresponds to an element of $\pi_n(G_{k+1})$. This induces the homomorphism $\lambda_1$, and in a similar fashion, $\lambda_2$ is defined. It is clear that
\begin{equation}\label{com:2}
  \begin{tikzcd}
\pi_n(SO(k+1)) \arrow[r,"\bar{\mu}"] \arrow[d,"e_1"] & \pi_n(G_{k+1}) \\
D_1^{k,n} \arrow[r,"\mu"] & H_1^{k,n} \arrow[u,"\lambda_1"]
\end{tikzcd}
\quad
\begin{tikzcd}
\pi_k(SO(n+1)) \arrow[r,"\bar{\mu}"] \arrow[d,"e_2"] & \pi_k(G_{n+1}) \\
D_2^{k,n} \arrow[r,"\mu"] & H_2^{k,n} \arrow[u,"\lambda_2"]
\end{tikzcd}\tag{$**$}
\end{equation}

\begin{thm}[{\cite [Theorem 3.3] {Lev69}}]\label{thm:2}
  $\lambda_1,\lambda_2$ are isomorphisms, assuming $n\geq 3$ and $k\geq 2$.
\end{thm}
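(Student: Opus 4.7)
The plan is to construct explicit inverses for $\lambda_1$ and $\lambda_2$, treating the homotopy case by a formal bookkeeping argument and the smooth case by a complement analysis. I will focus on $\lambda_1$, since the argument for $\lambda_2$ is symmetric with the roles of $S^k$ and $S^n$ swapped.

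In $\mathscr{H}$ the argument is essentially tautological. Any $[f] \in H_1^{k,n}$ admits an extension to $D^{k+1}\times S^n$, which forces $\mathrm{pr}_{S^n}\circ f\simeq \mathrm{pr}_{S^n}$; after a homotopy we may write $f(x,y)=(g(x,y),y)$ for some $g\colon S^k\times S^n\to S^k$. Membership of $f$ in $A_1$ then forces $g|_{S^k\times\{y_0\}}\simeq \mathrm{id}_{S^k}$, so the adjoint of $g$ defines a based map $S^n\to G_{k+1}$. This assignment is directly a two-sided inverse to $\lambda_1$ on $H_1^{k,n}$.

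In $\mathscr{D}$ I would build the inverse of $\lambda_1\colon D_1^{k,n}\to FC_n^{k+1}$ geometrically. Given a framed embedding $\iota\colon S^n\times D^{k+1}\hookrightarrow S^{n+k+1}$, let $W$ be the closure of the complement of its image. Alexander duality yields $H_*(W)\cong H_*(S^k)$; general position in codimension $k+1\geq 3$ gives $\pi_1(W)=1$; and $\dim W=n+k+1\geq 6$. In this range the $h$-cobordism theorem identifies $W$ smoothly with $S^k\times D^{n+1}$, and the resulting boundary identification, combined with $\iota$ on the other half, produces an element of $D_1^{k,n}$ that maps back to $\iota$ under $\lambda_1$ up to framed concordance. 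Injectivity is the parametrized version: a concordance of framed embeddings produces an $h$-cobordism of complements which the relative $s$-cobordism theorem recognizes as a product, yielding a concordance of the underlying self-diffeomorphisms.

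The main obstacle is the smooth recognition $W\cong S^k\times D^{n+1}$, which is exactly what dictates the hypotheses $n\geq 3$ and $k\geq 2$: the first feeds the $h$-cobordism theorem, the second ensures $\pi_1(W)=1$ and rules out the low-codimension knotting phenomena of Haefliger. Any ambiguity by an exotic sphere is absorbed into the $\alpha\cong\Theta_{k+n+1}$ factor of the decomposition $A=(A_1\oplus\alpha)\rtimes A_2$ established earlier, so it does not interfere with the identification of $D_1$ itself. Once this recognition is in hand, the commutative diagrams $(\ast)$ and $(\ast\ast)$ guarantee compatibility with the earlier homomorphisms $e_1,e_2$ and the comparison map $\bar{\mu}$, completing the argument.
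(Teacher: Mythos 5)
The paper does not actually prove this statement; it is quoted verbatim from Levine \cite{Lev69} (Theorem 3.3) and used as a black box, so there is no ``paper's own proof'' to compare your proposal against. Judged on its own terms, your sketch has the right spirit (build an inverse in $\mathscr{H}$ by adjunction, and in $\mathscr{D}$ by analyzing the complement of the framed tube), but the $\mathscr{D}$ half has two genuine gaps.

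First, the recognition $W \cong S^k\times D^{n+1}$ is not an application of the $h$-cobordism theorem: $W$ is a simply connected manifold with connected boundary $S^k\times S^n$, not a cobordism between two homotopy-equivalent pieces. What you actually need is Smale's handle-cancellation theorem to reduce $W$ (which is $1$-connected, of dimension $n+k+1\geq 6$, with $H_*(W)\cong H_*(S^k)$) to a single $0$-handle plus a single $k$-handle, i.e.\ a $D^{n+1}$-disk bundle over $S^k$, and then a separate argument that this disk bundle is trivial. That last step is not free: you must use the framing data coming from $\iota$ (or the classification of $S^n$-bundles over $S^k$ by $\pi_{k-1}(SO(n+1))$ together with the fact that $\partial W$ is identified with $S^k\times S^n$ as a framed boundary) to pin down the trivial bundle. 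Second, and more seriously, even after you have some identification $W\cong S^k\times D^{n+1}$, the resulting gluing diffeomorphism of $S^k\times S^n$ is only well-defined up to precomposition by an element that extends over $S^k\times D^{n+1}$, i.e.\ up to $D_2$; and there is no reason the class you get lands in $D_1$ rather than merely in $\bar{D}$. Your construction therefore produces an element of $\bar{D}$ modulo $D_2$, not a well-defined element of $D_1$. Levine's argument instead isotopes the framed $S^n$ into a tubular neighborhood $D^{k+1}\times S^n$ and then applies the $h$-cobordism theorem to the region between the two copies of $S^k\times S^n$, which is the correct $h$-cobordism and directly produces the extension $F$ needed to invert $\lambda_1$, avoiding both issues. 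The parametrized/relative version for injectivity, and the homotopy-theoretic half of the argument, are fine in outline but would need the same care about where the class actually lives.
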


\begin{cor}\label{thm:9}
  $\pi_4(SO(4))\xrightarrow[\cong]{e_1} D_1^{3,4}\xrightarrow[\cong]{\mu} H_1^{3,4}$, $\pi_3(SO(5))\xrightarrow[\cong]{e_2} D_2^{3,4}$.
\end{cor}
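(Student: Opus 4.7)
The three isomorphisms will follow from Theorem~\ref{thm:2} combined with Lemma~\ref{thm:10} and a computation of the Haefliger map $\pi_p(SO(m))\to FC_p^m$ from \cite[(5.10)]{Hae66} in the two low-dimensional cases at hand. My plan is simply to chase the diagrams (\ref{com:1}) and (\ref{com:2}) specialised to $k=3$, $n=4$, using Theorem~\ref{thm:2} to convert between the $D_i,H_i$ groups and the $FC$ and $\pi_*(G)$ groups.

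For $e_1$ and $\mu$ I would first extract the information from (\ref{com:2}). Commutativity there gives $\lambda_1\circ\mu\circ e_1=\bar\mu:\pi_4(SO(4))\to\pi_4(G_4)$, which is an isomorphism by Lemma~\ref{thm:10}; since $\lambda_1$ is an isomorphism by Theorem~\ref{thm:2}, so is $\mu\circ e_1$. This already gives $e_1$ injective and $\mu$ surjective. To upgrade to honest isomorphisms, specialise (\ref{com:1}) to the same indices: the composition $\lambda_1\circ e_1$ equals the Haefliger map $\pi_4(SO(4))\to FC_4^4$. The codimension-$4$ embedding $S^4\hookrightarrow S^8$ lies comfortably in Haefliger's metastable range, so this map is itself an isomorphism (isotopy-uniqueness of the underlying embedding, plus the fact that the framing group of the standard $S^4\subset S^8$ reduces to $\pi_4(SO(4))$ in this range). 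Combined with $\lambda_1$ iso, $e_1$ is iso, and then the earlier factorisation forces $\mu$ iso as well.

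For $e_2$ the same machinery applies, but one must go through (\ref{com:1}) only: (\ref{com:2}) is \emph{not} enough here, since $\bar\mu:\pi_3(SO(5))\to\pi_3(G_5)$ is the classical $J$-homomorphism $\mathbb{Z}\twoheadrightarrow\mathbb{Z}_{24}$ and is therefore far from injective. Instead, $\lambda_2\circ e_2$ is the Haefliger map $\pi_3(SO(5))\to FC_3^5$, and for the codimension-$5$ embedding $S^3\hookrightarrow S^8$ (even deeper in the metastable range) Haefliger's theorem identifies $FC_3^5$ with $\pi_3(SO(5))$. Combined with $\lambda_2$ iso, this gives $e_2$ iso.

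The main obstacle is verifying that the two Haefliger maps $\pi_4(SO(4))\to FC_4^4$ and $\pi_3(SO(5))\to FC_3^5$ are indeed isomorphisms. Isotopy-uniqueness of the underlying embeddings is immediate from Haefliger's embedding theorem in the metastable range, so the content lies in checking that no non-trivial framings collapse under concordance. This is a small exact-sequence computation inside \cite{Hae66}, which becomes routine once one observes that the adjacent obstruction groups vanish in our dimensions, but it is the one place where Theorem~\ref{thm:2} alone does not suffice.
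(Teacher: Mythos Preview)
Your strategy is exactly the paper's: reduce everything via Theorem~\ref{thm:2} and diagrams (\ref{com:1}), (\ref{com:2}) to showing that the Haefliger maps $\pi_3(SO(5))\to FC_3^5$ and $\pi_4(SO(4))\to FC_4^4$ are isomorphisms, and then read these off from the exact sequence
\[
\cdots\to C_{n+1}^{q}\to \pi_n(SO(q))\to FC_n^q\to C_n^q\to\cdots
\]
of \cite{Hae66}. Your observation that (\ref{com:2}) alone cannot handle $e_2$ is also correct and matches why the paper goes through $FC$.

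There is, however, a concrete gap in your final paragraph. You assert that ``the adjacent obstruction groups vanish in our dimensions,'' but for the $FC_4^4$ case this is false: while $C_4^4=0$ (so surjectivity of $\pi_4(SO(4))\to FC_4^4$ is immediate), on the other side one has $C_5^4\neq 0$, so injectivity does \emph{not} follow from vanishing alone. The paper handles this by invoking \cite[Theorem~8.2]{Ker59}: every embedded $S^5\subset S^9$ has trivial normal bundle, hence $FC_5^4\to C_5^4$ is onto, which then kills the map $C_5^4\to\pi_4(SO(4))$ in the exact sequence. Without this extra input your argument for $e_1$ being an isomorphism is incomplete. (For $FC_3^5$ your expectation is correct: both $C_4^5$ and $C_3^5$ vanish since $n<2q-3$ there.)
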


\begin{proof}
  Recall the long exact sequence in \cite{Hae66}:
  \[
  \cdots\to \pi_n(SO(q))\to FC_n^q\to C_n^q\to \pi_{n-1}(SO(q))\to\cdots,
  \]
  where $C_n^q$ is the group of concordance classes of embeddings of $S^n$ in $S^{n+q}$.

  For the case $FC_3^5$, we have
  \[
  \cdots\to C_4^5\to \pi_3(SO(5))\to FC_3^5\to C_3^5\to \cdots.
  \]
  Using the fact that $C_n^q=0$ for $n<2q-3$ (see e.g. \cite[(6.6)]{Hae66}), $\pi_3(SO(5))\xrightarrow{\cong} FC_3^5$ follows.

  For $FC_4^4$, we have
  \[
  \cdots\to FC_5^4\to C_5^4\to \pi_4(SO(4))\to FC_4^4\to C_4^4\to \cdots.
  \]
  $C_4^4=0$ while $C_5^4\neq0$. But $FC_5^4\to C_5^4$ is surjective, as all $5$-spheres in $S^9$ have trivial normal bundles (\cite [Theorem 8.2] {Ker59}). Therefore, $\pi_4(SO(4))\xrightarrow{\cong} FC_4^4$.
  Then the corollary follows from Theorem \ref{thm:2}, Lemma \ref{thm:10} and commutative diagrams (\ref{com:1}) (\ref{com:2}).
\end{proof}

The action $\hat{\phi}$ can be divided into two parts, i.e.
\[
\hat{\phi}(g_2).g_1=\phi(g_2).g_1+\tau(g_2).g_1, \text{ for } g_i\in D_i,
\]
where $\phi:D_2\to \aut D_1$ is a homomorphism, and $\tau:D_2\to\Hom(D_1,\alpha)$ satisfies:
\[
\tau(gg')=\tau(g)\phi(g')+\tau(g'), \text{ for } g,g'\in D_2.
\]

In $\mathscr{H}$, the action $\phi$ can be completely determined, which will be presented below. Noticed that the homomorphism $\mu:\bar{D}\to\bar{H}$ preserves the action $\phi$, hence a large amount of information about $\phi$ in $\mathscr{D}$ can also be known.

Let $\theta=\bar{i}_*I^{-1}:\pi_{n+m}(S^{m})\xleftarrow[\cong]{I}\pi_n(F_m)\xrightarrow{\bar{i}_*} \pi_n(G_{m+1})$. We now identify $H_1^{k,n}$ with $\pi_n(G_{k+1})$ and $H_2^{k,n}$ with $\pi_k(G_{n+1})=\im\theta$, using Theorem \ref{thm:2} and the surjectivity of $\bar{i}_*:\pi_k(F_n)\to\pi_k(G_{n+1})$ when $k<n$.

\begin{prop}[{\cite [Proposition 4.2] {Lev69}}]\label{thm:11}
  $\phi(\theta(\xi)).\beta=\beta-\theta(\bar{p}_*(\beta)\circ\xi)$, for $\xi\in \pi_{n+k}(S^n)$ and $\beta\in\pi_n(G_{k+1})$.
\end{prop}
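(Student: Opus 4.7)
The plan is to pick explicit geometric representatives of $\beta$ and $\theta(\xi)$, compute the conjugate $g_2 g_1 g_2^{-1}$ by hand, and read off its class in $A_1 \cong \pi_n(G_{k+1})$ via its first-coordinate adjoint. Using the surjectivity of $\bar{i}_* : \pi_k(F_{n+1}) \to \pi_k(G_{n+1})$ (valid for $k<n$), I would lift $\theta(\xi)$ to $\tilde{\xi} \in \pi_k(F_{n+1})$ with $I(\tilde{\xi}) = \xi$. Choose based representatives $\tilde{\xi}: S^k \to F_{n+1}$ and $\beta: S^n \to G_{k+1}$, so that $\tilde{\xi}(x_0) = \mathrm{id}_{S^n}$, each $\tilde{\xi}(x)$ fixes $y_0$, and $\beta(y_0) = \mathrm{id}_{S^k}$. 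Set
\[
g_1(x,y) = (\beta(y)\cdot x,\; y), \qquad g_2(x,y) = (x,\; \tilde{\xi}(x)\cdot y),
\]
as representatives of $\beta$ and $\theta(\xi)$ respectively.

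A direct substitution gives
\[
(g_2 g_1 g_2^{-1})(x,y) = \bigl(\beta(\tilde{\xi}(x)^{-1}y)\cdot x,\;\tilde{\xi}(\beta(\tilde{\xi}(x)^{-1}y)\cdot x)(\tilde{\xi}(x)^{-1}y)\bigr).
\]
Since $A_1$ is normal in $A = A_1 \rtimes A_2$, this is homotopic to a self-equivalence of the form $(x,y) \mapsto (\beta'(y)\cdot x,\, y)$, with $\beta' \in \pi_n(G_{k+1})$ determined by the adjoint of $h(x,y) := \beta(\tilde{\xi}(x)^{-1}y)\cdot x$. The goal reduces to showing $\beta' = \beta - \theta(\bar{p}_*(\beta)\circ\xi)$.

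Comparing $h$ with the map $h_0(x,y) := \beta(y)\cdot x$ representing $\beta$, the basepoint conditions force agreement on $S^k \vee S^n$: indeed $h(x_0,y) = \beta(y)\cdot x_0 = h_0(x_0,y)$ by $\tilde{\xi}(x_0) = \mathrm{id}$, and $h(x,y_0) = \beta(y_0)\cdot x = x = h_0(x,y_0)$ since $\tilde{\xi}(x)$ fixes $y_0$ and $\beta$ is based. In particular, restriction to $\{x_0\}\times S^n$ is unchanged, so $\bar{p}_*(\beta' - \beta) = 0$; by exactness of the fibre sequence $F_{k+1} \to G_{k+1} \xrightarrow{\bar{p}} S^k$, the class $\beta' - \beta$ lies in the image of $\bar{i}_*$. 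It therefore suffices to identify the top-cell obstruction class in $\pi_{n+k}(S^k)$ arising from $h - h_0$ with $-\bar{p}_*(\beta)\circ\xi$, and then apply $\theta = \bar{i}_* \circ I^{-1}$ to lift it.

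The main obstacle is this last identification. The map $(x,y) \mapsto \tilde{\xi}(x)^{-1}y$ differs from the projection onto $S^n$ by exactly $-\xi$ on the top cell, by the geometric content of $I(\tilde{\xi}) = \xi$ and the sign rule $I(\tilde{\xi}^{-1}) = -I(\tilde{\xi})$. Plugging this into $h$ and passing to the quotient $S^k \times S^n / (S^k \vee S^n) \cong S^{n+k}$, the $x$-slot of $\beta$ collapses onto the basepoint $x_0$ (the diagonal contribution in $x$ is killed in the quotient), so the $x$-factor of $\beta(y)\cdot x$ reduces to $\beta(y)\cdot x_0 = \bar{p}_*(\beta)(y)$. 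The resulting top-cell class is therefore the composition $\bar{p}_*(\beta) \circ (-\xi) = -\bar{p}_*(\beta)\circ\xi$, whence $\beta' - \beta = -\theta(\bar{p}_*(\beta)\circ\xi)$, completing the proof.
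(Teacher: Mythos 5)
The paper itself gives no proof of this proposition --- it is cited directly from Levine (\cite[Proposition 4.2]{Lev69}) --- so what you have produced is a blind reconstruction of Levine's argument rather than a comparison with this paper's proof. Your reconstruction is correct in outline and captures the standard geometric strategy: pick representatives in the special form $(x,y)\mapsto(\beta(y)x,y)$ and $(x,y)\mapsto(x,\tilde\xi(x)y)$, compute the conjugate explicitly, observe it still lies in $A_1$ by normality, reduce modulo $S^k\vee S^n$ by checking agreement with $h_0$ there, and identify the resulting top-cell class. The computation of $g_2g_1g_2^{-1}$ is right, the check that $h$ and $h_0$ agree on $S^k\vee S^n$ is right, and the exactness argument showing $\beta'-\beta\in\im\bar{i}_*$ is right.

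The one place your write-up is genuinely thin is the final identification: the sentence ``the $x$-slot of $\beta$ collapses onto the basepoint $x_0$ (the diagonal contribution in $x$ is killed in the quotient)'' does not, as stated, constitute an argument. A clean way to close this gap is to factor both $h$ and $h_0$ through the evaluation map $\mathrm{ev}\colon S^k\times G_{k+1}\to S^k$, namely $h = \mathrm{ev}\circ(\pi_k,\beta\circ\psi)$ and $h_0 = \mathrm{ev}\circ(\pi_k,\beta\circ\pi_n)$ where $\psi(x,y)=\tilde\xi(x)^{-1}y$. The two maps into the product $S^k\times G_{k+1}$ agree on $S^k\vee S^n$ and their top-cell difference lives in $\pi_{n+k}(S^k\times G_{k+1})\cong\pi_{n+k}(S^k)\oplus\pi_{n+k}(G_{k+1})$; the first component vanishes (same $\pi_k$), and the second is $\beta_*\bigl(d(\psi,\pi_n)\bigr)=\beta\circ(-\xi)$ by functoriality of the difference element. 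Pushing forward by $\mathrm{ev}_*$, the relevant restriction is $\mathrm{ev}|_{\{x_0\}\times G_{k+1}}=\bar p$, so $d(h,h_0)=\bar p_*\bigl(\beta\circ(-\xi)\bigr)=-\bar p_*(\beta)\circ\xi$, which is exactly the class you want. This is the precise meaning of your ``collapse onto the basepoint'' heuristic, and including it would make the proof complete. One minor index remark: you lift to $\tilde\xi\in\pi_k(F_{n+1})$ and describe $F_{n+1}$ as based self-maps of $S^n$; the paper's stated formula $I\colon\pi_n(F_m)\to\pi_{n+m}(S^m)$ has an off-by-one inconsistency with the fibration ladder $F_{n+1}\to G_{n+1}\to S^n$, so your usage is defensible, but be aware the source is ambiguous there.
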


The function $\tau:D_2\to\Hom(D_1,\alpha)$, or in some different words, the pairing $\tau: D_1^{k,n}\otimes D_2^{k,n}\to\Theta_{k+n+1}$, can be viewed as a special case of the pairing
\[
T: \pi_n(SO(k+1))\otimes\pi_k(SO(n+1))\to\Theta_{k+n+1}
\]
studied by Milnor in \cite{Mil59}. Namely,
\[
\begin{tikzcd}
\pi_n(SO(k+1))\otimes\pi_k(SO(n+1)) \arrow[d,"e_1\otimes e_2"'] \arrow[dr,"T"] &  \\
D_1^{k,n}\otimes D_2^{k,n} \arrow[r,"\tau"] & \Theta_{k+n+1}
\end{tikzcd}
\]

\begin{lem}\label{thm:12}
  For $k=3, n=4$, $\tau$ coincide with $T$.
\end{lem}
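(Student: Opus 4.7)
The plan is to verify $\tau\bigl(e_2(\beta)\bigr).\,e_1(\alpha) = T(\alpha,\beta)$ directly, by unwinding both constructions and exhibiting a common geometric model. Choose representatives $\alpha\colon S^4\to SO(4)$ and $\beta\colon S^3\to SO(5)$, set $g_1=e_1(\alpha)$ and $g_2=e_2(\beta)$, given explicitly by $g_1(x,y)=(\alpha(y)\cdot x,y)$ and $g_2(x,y)=(x,\beta(x)\cdot y)$. Each admits a tautological extension: $g_1$ to $G_1\in\diff(D^4\times S^4)$ and $g_2$ to $G_2\in\diff(S^3\times D^5)$, given by the same formulas (viewing $\alpha(y)\in SO(4)$ as an orthogonal automorphism of $\mathbb{R}^4\supset D^4$, and likewise for $\beta$).

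The first step is to identify $\tau(g_2).g_1\in\alpha\cong\Theta_8$ geometrically. By definition of $\hat{\phi}$ and $\tau$, this class is the $\alpha$-component of the conjugate $g_2 g_1 g_2^{-1}$ in the decomposition $A_1\oplus\alpha$, or equivalently the class of the ``commutator'' self-diffeomorphism of $S^3\times S^4$ assembled from $g_1$ and $g_2$. Unwinding Levine's proof of Theorem 2.4, one repackages $\tau(g_2).g_1$ as the concordance class of the homotopy $8$-sphere $\Sigma(\alpha,\beta)$ obtained by taking two copies of $D^4\times D^5$ and gluing them along their common boundary $\partial(D^4\times D^5)=S^3\times D^5\cup_{S^3\times S^4}D^4\times S^4$ via a self-diffeomorphism of the boundary built out of $G_1$ and $G_2$.

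The second step is to recognise $\Sigma(\alpha,\beta)$ as Milnor's homotopy sphere $T(\alpha,\beta)$ from \cite{Mil59}. Milnor's construction manufactures precisely the same manifold by the same recipe: its clutching diffeomorphism on $\partial(D^4\times D^5)$ is assembled from the rotations $\alpha$ and $\beta$ in the same way. With consistent orientation conventions the two gluings coincide on the nose, so $[\Sigma(\alpha,\beta)]=T(\alpha,\beta)$ in $\Theta_8$, and composing with the isomorphisms $e_1,e_2$ of Corollary \ref{thm:9} yields the commutativity of the diagram.

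The main obstacle is the bookkeeping of orientations and composition conventions: a priori the two constructions could differ by an overall sign, producing $\tau=-T$ rather than $\tau=T$. In the dimensions of interest, however, $\Theta_{k+n+1}=\Theta_8\cong\mathbb{Z}_2$, so every sign ambiguity is invisible and the identification goes through unambiguously. This is precisely where the restriction to $k=3$, $n=4$ simplifies matters and lets one avoid a more delicate sign analysis.
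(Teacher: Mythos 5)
The paper's proof is a one-liner: Corollary \ref{thm:9} shows that $e_1\colon\pi_4(SO(4))\to D_1^{3,4}$ and $e_2\colon\pi_3(SO(5))\to D_2^{3,4}$ are isomorphisms, so the commutative triangle $\tau\circ(e_1\otimes e_2)=T$ (displayed just before the lemma and taken as given) identifies $\tau$ with $T$. The content of the lemma is the \emph{identification} of the domains, not the commutativity, which holds in all dimensions where both pairings are defined. You have inverted this emphasis: your effort goes into the commutativity, while Corollary \ref{thm:9} appears only as a throwaway at the end, and you attribute the hypothesis $k=3$, $n=4$ to sign ambiguities in $\Theta_8\cong\mathbb{Z}_2$. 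In fact that hypothesis is needed precisely so that $e_1\otimes e_2$ is an isomorphism; without it, ``$\tau$ coincides with $T$'' is not even a well-posed statement, since the two pairings have different domains.

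There is also a genuine error in the geometric model. You describe $\Sigma(\alpha,\beta)$ as two copies of $D^4\times D^5$ glued along their common boundary $\partial(D^4\times D^5)$. But $D^4\times D^5$ is $9$-dimensional and $\partial(D^4\times D^5)=S^3\times D^5\cup D^4\times S^4\cong S^8$, so this recipe produces a closed $9$-manifold, whereas $\tau$ and $T$ take values in $\Theta_{k+n+1}=\Theta_8$. The correct model is an $8$-manifold, e.g.\ $D^4\times S^4\cup_h S^3\times D^5$ glued along $S^3\times S^4$ by a clutching diffeomorphism $h$, or equivalently a twisted double $D^8\cup_\phi D^8$ via the identification $\alpha\cong\Theta_{k+n+1}$ of Proposition 2.3 of Levine. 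Finally, the two steps that carry all the weight of your argument --- extracting $\tau(g_2).g_1$ from Levine's proof of Theorem 2.4 as a concrete homotopy sphere, and matching that sphere to Milnor's construction --- are asserted rather than carried out, so even with the dimensions corrected the proof is incomplete as written.
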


\begin{proof}
  By Corollary \ref{thm:9}.
\end{proof}

\section{The classification of smooth manifolds} \label{sec:5}

\begin{lem}
  A smooth $8$-manifold $M$ is of type ($\ast$) if and only if $M\cong S^3\times D^5\cup_f S^3\times D^5$ for some $f\in \diff(S^3\times S^4)$.
\end{lem}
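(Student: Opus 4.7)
The $(\Leftarrow)$ direction is routine: any diffeomorphism of $S^3\times S^4$ acts by $\pm 1$ on $H_3\cong\mathbb{Z}$, so Mayer--Vietoris immediately gives $H_*(M)\cong H_*(S^3\times S^5)$, while van Kampen (both pieces and their intersection are simply connected) yields $\pi_1(M)=1$.

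For $(\Rightarrow)$, suppose $M$ is of type $(\ast)$. Since $M$ is $1$-connected with $H_2=0$, Hurewicz gives $\pi_3(M)\cong\mathbb{Z}$; I would represent a generator by a smooth embedding $S^3\hookrightarrow M$ (obtainable by general position, since $2\cdot 3+1<8$). Rank-$5$ bundles over $S^3$ are classified by $\pi_2(SO(5))=0$, so the normal bundle is trivial and we obtain a tubular neighborhood $\nu\cong S^3\times D^5$. Setting $N=M\setminus\mathrm{int}(\nu)$, excision gives $H_*(M,N)\cong H_*(\nu,\partial\nu)\cong H_{*-5}(S^3)$; combined with the fact that any generator of $H_5(M)$ meets the core $S^3$ in $\pm 1$ points (by unimodularity of the intersection pairing $H_3(M)\otimes H_5(M)\to\mathbb{Z}$), this makes the long exact sequence of $(M,N)$ collapse to $H_*(N)\cong H_*(S^3)$. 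Van Kampen gives $\pi_1(N)=1$, so $N\simeq S^3$ by Whitehead.

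Now iterate the construction inside $N$: embed another $S^3\hookrightarrow\mathrm{int}(N)$ representing a generator of $\pi_3(N)$, take its trivial tubular neighborhood $\nu'\cong S^3\times D^5$, and let $W=N\setminus\mathrm{int}(\nu')$, a cobordism from $\partial\nu'\cong S^3\times S^4$ to $\partial N\cong S^3\times S^4$. Mayer--Vietoris for $N=\nu'\cup W$ gives $\pi_1(W)=1$, $H_*(W)\cong H_*(S^3\times S^4)$, and shows that $\partial\nu'\hookrightarrow W$ is a homology isomorphism, hence a homotopy equivalence by Whitehead. Poincar\'e--Lefschetz duality $H_*(W,\partial\nu')\cong H^{8-*}(W,\partial N)$ then forces $\partial N\hookrightarrow W$ to be a homotopy equivalence as well, so $W$ is a simply connected $8$-dimensional $h$-cobordism; Smale's $h$-cobordism theorem gives $W\cong S^3\times S^4\times[0,1]$. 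Absorbing this collar into $\nu'$ yields $N\cong S^3\times D^5$, and therefore $M=\nu\cup N\cong S^3\times D^5\cup_f S^3\times D^5$ for some $f\in\diff(S^3\times S^4)$. The main obstacle is the $h$-cobordism step; the duality trick is what gets the second boundary inclusion for free once the first has been verified.
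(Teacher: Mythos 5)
Your proof is correct, but it takes a genuinely different route from the paper's. The paper invokes the existence of a self-indexed \emph{minimal} Morse function on $M$ (which, given the homology of $M$, forces exactly four critical points, of indices $0,3,5,8$), splits $M$ at a regular level between $3$ and $5$, and observes that each half is $D^8$ with a single $3$-handle attached; since orientation-preserving framed embeddings of $S^2$ in $S^7$ are all isotopic, each half is $S^3\times D^5$. You instead work intrinsically from a generator of $\pi_3(M)$: embed it with trivial normal bundle to get $\nu\cong S^3\times D^5$, show the complement $N$ has the homology (hence homotopy type) of $S^3$, then repeat inside $N$ and recognize the resulting cobordism $W$ as a simply connected $h$-cobordism via Mayer--Vietoris and Poincar\'e--Lefschetz duality, after which the $h$-cobordism theorem and collar absorption give $N\cong S^3\times D^5$. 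Both arguments ultimately lean on Smale's handle-trading machinery -- the paper hides it in the existence of the minimal Morse function, you invoke the $h$-cobordism theorem explicitly -- but the decompositions are different in spirit: the paper's is symmetric (split in the middle), yours is asymmetric (peel off one side, then show what remains is standard). The paper's version is shorter; yours is more self-contained and makes the role of the intersection form and the duality between $H_3$ and $H_5$ visible, which is pedagogically useful. Both are valid.
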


\begin{proof}
  The \lq\lq if \rq\rq\ part can be easily seen using van Kampen theorem and Mayer-Vietories sequence.

  For the \lq\lq only if\rq\rq\ part, first notice that there exists a self-indexed minimal Morse fuction $h: M\to\mathbb{R}$. Namely, $h$ has only $4$ critical points, with values $0,3,5,8$ respectively. Let $W_1=h^{-1}(-\infty, 4]$ and $W_2=h^{-1}[4, +\infty)$. Then $M=W_1\cup W_2$, and both $W_1$ and $W_2$ can be obtained by pasting a $3$-handle $D^3\times D^5$ on $D^8$. But orientation preserving framed imbeddings of $S^2$ in $S^7$ are all isotopic. Therefore $W_1\cong W_2\cong S^3\times D^5$.
\end{proof}

Let $M(f)=S^3\times D^5\cup_f S^3\times D^5$, where $f\in \diff(S^3\times S^4)$. Noticing that if $f_0, f_1 \in \diff(S^3\times S^4)$ are concordant, then $M(f_0)\cong M(f_1)$. Therefore, each $\alpha=[f]\in \bar{D}=\bar{D}^{3,4}$ defines an element $[M(f)]\in\mathcal{S}$, which is denoted by $M(\alpha)$.

\begin{prop}\label{thm:3}
  $M(\alpha_1)=M(\alpha_2)$ if and only if there exist $\beta_1, \beta_2\in\bar{D}_2$, such that $\beta_1\alpha_1=\alpha_2\beta_2$.

  In other words, $\mathcal{S}\cong \bar{D}/\sim$, where $\alpha_1\sim\alpha_2$ if there exist $\beta_1, \beta_2\in\bar{D}_2$, such that $\beta_1\alpha_1=\alpha_2\beta_2$.
\end{prop}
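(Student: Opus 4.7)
The proposition asserts that $M(\alpha_1)\cong M(\alpha_2)$ precisely when the two gluing maps differ by pre- and post-composition with diffeomorphisms of $S^3\times S^4$ that extend across a filled-in $D^5$ factor. The plan is to handle the two directions separately.

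For the \emph{if} direction, suppose $\beta_1\alpha_1=\alpha_2\beta_2$ as concordance classes, with $\beta_i\in\bar{D}_2$. I pick representatives and extensions $\tilde\beta_i:S^3\times D^5\to S^3\times D^5$. Define $\Phi:M(\alpha_1)\to M(\alpha_2)$ by applying $\tilde\beta_2$ on the first copy of $S^3\times D^5$ and $\tilde\beta_1$ on the second. The identity $\beta_1\alpha_1=\alpha_2\beta_2$ on the shared boundary $S^3\times S^4$ is exactly the compatibility needed to make $\Phi$ a well-defined diffeomorphism.

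For the \emph{only if} direction, given a diffeomorphism $\Phi:M(\alpha_1)\to M(\alpha_2)$, the plan is to isotope $\Phi$ until it respects the handle decomposition. Each piece $A_i\cong S^3\times D^5\subset M(\alpha_i)$ is the closed tubular neighborhood of its core $C_i=S^3\times\{0\}$, an embedded $3$-sphere generating $\pi_3(M(\alpha_i))\cong H_3(M(\alpha_i))=\mathbb{Z}$ (Hurewicz, using that $M$ is $2$-connected since $H_2=0$). The dimension pair satisfies Haefliger's metastable condition $2\cdot 8\geq 3(3+1)$, and $\pi_2(SO(5))=0$ forces the normal bundle of any such embedded $S^3$ to be trivial. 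Hence every embedding of $S^3$ in $M(\alpha_2)$ representing a generator of $\pi_3$ is isotopic to $C_2$, and isotopy extension promotes this to an ambient isotopy. Applying such an ambient isotopy to $\Phi$, I may assume $\Phi(C_1)=C_2$. The residual freedom in choosing a trivialization of the tubular neighborhood is governed by $\pi_3(SO(5))$, and by Corollary~\ref{thm:9} this is precisely $D_2^{3,4}$ via $e_2$, so the ambiguity can be absorbed on the $\bar{D}_2$ side. We may then arrange $\Phi(A_1)=A_2$ and hence $\Phi(B_1)=B_2$. Reading off boundary restrictions, $\beta_2:=\Phi|_{\partial A_1}$ and $\beta_1:=\Phi|_{\partial B_1}$ lie in $\bar{D}_2$ by construction, and well-definedness of $\Phi$ across the shared boundary gives $\beta_1\alpha_1=\alpha_2\beta_2$.

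The main obstacle is the ambient isotopy step: one must invoke Haefliger's embedding theorem in the metastable range, and then track how the tubular-neighborhood ambiguity coincides precisely with the freedom in $\bar{D}_2$ supplied by Corollary~\ref{thm:9}. Everything else is routine handle-gluing bookkeeping, provided one is careful about working consistently at the level of concordance rather than strict equality.
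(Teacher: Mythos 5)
The paper gives no proof of this proposition; it simply defers to Levine (Lemma 5.4, Proposition 5.7 in \cite{Lev69}). Your proposal essentially reconstructs what Levine does: the ``if'' direction is a direct gluing check, and the ``only if'' direction isotopes an arbitrary diffeomorphism $\Phi$ to preserve the handle decomposition by moving $\Phi(C_1)$ to $C_2$ via Haefliger's isotopy theorem and then applying tubular-neighbourhood uniqueness. The overall strategy is the right one and the ``if'' direction is clean.

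Two points in the ``only if'' direction deserve tightening. First, Haefliger's theorem gives isotopy of \emph{homotopic} embeddings; $\Phi(C_1)$ and $C_2$ both generate $\pi_3(M(\alpha_2))\cong\mathbb{Z}$, but possibly with opposite sign, so you should note that a reflection $S^3\times D^5\to S^3\times D^5$ in $\bar{D}_2$ absorbs the sign before you can assert they are homotopic and hence isotopic. Second, the sentence about ``residual freedom\dots governed by $\pi_3(SO(5))$'' is a red herring: once $\Phi$ has been isotoped so that $\Phi(A_1)=A_2$ as subsets, the restriction $\Phi|_{\partial A_1}$ automatically lies in $\bar{D}_2$ simply because it extends over $A_1\cong S^3\times D^5$ by definition of $\bar{D}_2$; you do not need to invoke Corollary~\ref{thm:9} or analyze trivialization ambiguity at this stage. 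With those remarks addressed, the argument is a sound filling-in of the citation to \cite{Lev69}.
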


\begin{cor}\label{thm:4}
  Elements in $\mathcal{S}$ are 1-1 correspondence with the orbits of the conjugation action of $\bar{D}_2$ on $D_1\oplus\alpha$.
\end{cor}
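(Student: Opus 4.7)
The plan is to combine Proposition \ref{thm:3} with the semidirect product decomposition $\bar{D} \cong (D_1 \oplus \alpha) \rtimes \bar{D}_2$ obtained in the corollary to \cite[Theorem 2.4]{Lev69}. In that decomposition every element of $\bar{D}$ factors uniquely as a product $gh$ with $g \in D_1 \oplus \alpha$ and $h \in \bar{D}_2$, and by construction $\bar{D}_2$ acts on the normal factor $D_1 \oplus \alpha$ by conjugation inside $\bar{D}$---this is the same action referred to in the statement of the corollary.

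Given any $\alpha_1 \in \bar{D}$, write $\alpha_1 = g_1 h_1$ as above. I would then define the candidate map
\[
\mathcal{S} \longrightarrow (D_1 \oplus \alpha)/\bar{D}_2, \qquad [M(\alpha_1)] \longmapsto [g_1].
\]
Surjectivity is automatic once the map is well defined: every $g \in D_1 \oplus \alpha$ is already an element of $\bar{D}$ (with trivial $\bar{D}_2$-factor), so it represents its own orbit.

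The substantive step is to show that $\alpha_1 \sim \alpha_2$ in the sense of Proposition \ref{thm:3} if and only if $g_1$ and $g_2$ lie in the same $\bar{D}_2$-orbit. Writing $\alpha_i = g_i h_i$ and expanding $\beta_1 \alpha_1 = \alpha_2 \beta_2$ inside the semidirect product yields
\[
\beta_1 \cdot g_1 = g_2, \qquad \beta_1 h_1 = h_2 \beta_2,
\]
where $\beta_1 \cdot g_1$ denotes the conjugation action. The second equation places no constraint on the $g_i$: for any $\beta_1 \in \bar{D}_2$, the element $\beta_2 := h_2^{-1}\beta_1 h_1$ lies in $\bar{D}_2$ and solves it. Hence $\alpha_1 \sim \alpha_2$ reduces to the existence of $\beta_1 \in \bar{D}_2$ with $\beta_1 \cdot g_1 = g_2$, which is precisely the orbit condition.

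Because the reasoning is essentially a formal computation in a semidirect product, I do not anticipate a real obstacle. The one point requiring care is verifying that the action $\hat{\phi}$ appearing in \cite[Theorem 2.4]{Lev69} really coincides with conjugation by $\bar{D}_2$ inside $\bar{D}$; this is already implicit in the way the semidirect product structure is extracted in the preceding section, so no extra work should be needed.
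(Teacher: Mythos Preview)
Your argument is correct: the semidirect product decomposition $\bar{D}\cong(D_1\oplus\alpha)\rtimes\bar{D}_2$ reduces the double--coset relation of Proposition~\ref{thm:3} to the orbit relation exactly as you compute, and the identification of the structural action with conjugation is automatic for any semidirect product. The paper itself gives no proof here, simply citing \cite[Lemma~5.4, Proposition~5.7]{Lev69}; your write-up is precisely the routine verification that citation is meant to cover.
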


Proposition \ref{thm:3} and Corollary \ref{thm:4} are acturally special cases of Lemma 5.4, Proposition 5.7 in \cite{Lev69}.

Now we analysis the action of $\bar{D}_2$. We'll only analysis the action of $D_2$, i.e. $\hat{\phi}$, as it will be seen that it's already enough for our final result.

\begin{prop}\label{thm:13}
  Identify $D_1, D_2$ with $\pi_4(SO(4)), \pi_3(SO(5))$, respectively. Notations are as in Lemma \ref{thm:5}. Then
  \begin{align*}
    \phi(a)(\sigma_*\eta_3) & =\sigma_*\eta_3+i_*b\eta_3, \\
    \phi(a)(i_*b\eta_3) & =i_*b\eta_3.
  \end{align*}
\end{prop}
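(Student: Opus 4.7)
The plan is to invoke Proposition \ref{thm:11}, which gives $\phi(\theta(\xi))(\beta) = \beta - \theta(\bar{p}_*(\beta)\circ\xi)$, after writing $\bar{\mu}(a) \in \pi_3(G_5)$ as $\theta(\xi)$ for some $\xi \in \pi_7(S^4)$. The first (and main) step is to identify the correct $\xi$. Since $a = i_*\sigma_*\iota_3$, naturality of the ladder obtained from $SO(4)\to SO(5)\to S^4$ and the corresponding $F$-$G$ row gives $\bar{\mu}(a) = \bar{i}_*\bar{\mu}'(\sigma_*\iota_3)$. Combining the factorization $J = I\bar{\mu}'$ from diagram (\ref{eq:1}) with $J\sigma_*\iota_3 = \nu_4$ from Lemma \ref{thm:18}(3), one gets $\bar{\mu}'(\sigma_*\iota_3) = I^{-1}(\nu_4)$, and hence $\bar{\mu}(a) = \theta(\nu_4)$. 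So one takes $\xi = \nu_4$.

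Next I would compute $\bar{p}_*$ on each of the two basis elements. Under $\bar{\mu}: \pi_4(SO(4)) \xrightarrow{\cong} \pi_4(G_4)$, the map $\bar{p}_*$ corresponds to $p_*: \pi_4(SO(4)) \to \pi_4(S^3)$. Since $p\sigma = \id_{S^3}$ we get $p_*(\sigma_*\eta_3) = \eta_3$, while $p_*(i_*b\eta_3) = 0$ because $p\circ i$ is constant. The second case immediately gives $\phi(a)(i_*b\eta_3) = i_*b\eta_3 - \theta(0) = i_*b\eta_3$, as required.

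For $\beta = \sigma_*\eta_3$, Proposition \ref{thm:11} yields $\phi(a)(\sigma_*\eta_3) = \sigma_*\eta_3 - \theta(\eta_3\nu_4)$. By Lemma \ref{thm:18}(4), $\eta_3\nu_4 = a_3\eta_6$, and by Lemma \ref{thm:8}(1), $a_3\eta_6 = J(b\eta_3)$. Using $J = I\bar{\mu}'$ and then applying $\bar{i}_*$, with naturality of the analogous ladder $(SO(3)\to SO(4)\to S^3) \to (F_3 \to G_4 \to S^3)$, one obtains $\theta(a_3\eta_6) = \bar{i}_*\bar{\mu}'(b\eta_3) = \bar{\mu}(i_*b\eta_3)$, which corresponds to $i_*b\eta_3$ under the $\bar{\mu}$-identification. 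Since $i_*b\eta_3$ has order $2$ by Lemma \ref{thm:5}(2), the minus sign becomes a plus, giving $\phi(a)(\sigma_*\eta_3) = \sigma_*\eta_3 + i_*b\eta_3$. The only real subtlety is matching $\bar{\mu}(a)$ with $\theta(\nu_4)$ in the first step; once that identification is secured via (\ref{eq:1}) and the ladder of fibrations, everything else is short bookkeeping through the Toda-style Lemmas \ref{thm:17}, \ref{thm:18} and \ref{thm:8}.
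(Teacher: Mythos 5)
Your proposal matches the paper's proof essentially step for step: you identify $\bar{\mu}(a)=\theta(\nu_4)$ via the ladder, diagram (\ref{eq:1}) and $J\sigma_*\iota_3=\nu_4$; apply Proposition \ref{thm:11}; compute $\bar{p}_*\bar{\mu}=p_*$ on the two basis elements; and convert $\theta(\eta_3\nu_4)=\theta(a_3\eta_6)$ back to $\bar{\mu}(i_*b\eta_3)$ via $J=I\bar{\mu}'$ and the naturality $\bar{i}_*\bar{\mu}'=\bar{\mu}i_*$, finishing with the order-2 observation. The only thing worth making explicit is the justification (Corollary \ref{thm:9} and Lemma \ref{thm:10}) that the $\bar{\mu}$-identification lets you transport the computation from $\mathscr{H}$ back to $\mathscr{D}$, which the paper cites at the outset but you leave implicit.
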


\begin{proof}
  By corollary \ref{thm:9}, we can just passing to $\mathscr{H}$ to do calculations.
  \[
  \begin{split}
     \bar{\mu}(\phi(a)) & =\phi(\bar{\mu}(a))=\phi(\bar{\mu}(i_*\sigma_*\iota_3))=\phi(\bar{i}_*\bar{\mu}'\sigma_*\iota_3) \\
       & =\phi(\bar{i}_*I^{-1}J\sigma_*\iota_3)=\phi(\theta(J\sigma_*\iota_3))=\phi(\theta(\nu_4))
  \end{split}
  \]
  Then using the formula of Proposition \ref{thm:11},
  \[
  \begin{split}
     \bar{\mu}(\phi(a)(\sigma_*\eta_3)) & =\phi(\bar{\mu}(a))(\bar{\mu}(\sigma_*\eta_3)) \\
       & =\phi(\theta(\nu_4))(\bar{\mu}(\sigma_*\eta_3)) \\
       & =\bar{\mu}(\sigma_*\eta_3)-\theta(\bar{p}_*(\bar{\mu}(\sigma_*\eta_3))\circ \nu_4) \\
       & =\bar{\mu}(\sigma_*\eta_3)-\theta(p_*\sigma_*\eta_3\circ \nu_4) \\
       & =\bar{\mu}(\sigma_*\eta_3)-\theta(\eta_3\nu_4) \\
       & =\bar{\mu}(\sigma_*\eta_3)-\bar{i}_*I^{-1}(J(b\eta_3)) \\
       & =\bar{\mu}(\sigma_*\eta_3)-\bar{\mu}i_*(b\eta_3) \\
       & =\bar{\mu}(\sigma_*\eta_3-i_*b\eta_3) \\
       & =\bar{\mu}(\sigma_*\eta_3+i_*b\eta_3)
  \end{split}
  \]
  Similarly,
  \[
  \begin{split}
     \bar{\mu}(\phi(a)(i_*b\eta_3)) & =\phi(\theta(\nu_4))(\bar{\mu}(i_*b\eta_3)) \\
       & =\bar{\mu}(i_*b\eta_3)-\theta(\bar{p}_*(\bar{\mu}(i_*b\eta_3))\circ \nu_4) \\
       & =\bar{\mu}(i_*b\eta_3)-\theta(p_*(i_*b\eta_3)\circ \nu_4) \\
       & =\bar{\mu}(i_*b\eta_3)
  \end{split}
  \]

  Since $\bar{\mu}$ is an isomorphism, it completes the proof.
\end{proof}

For $\tau:D_2\to\Hom(D_1,\alpha)$, it is equivalent to analysis the Milnor pairing $T: \pi_4(SO(4))\otimes\pi_3(SO(5))\to\Theta_8$, by Lemma \ref{thm:12}. It was almost done in \cite{Fra68}.

\begin{prop}\label{thm:14}
  \[
    \begin{array}{cccc}
      T: & \pi_4(SO(4))\otimes\pi_3(SO(5)) & \longrightarrow & \Theta_8 \\
       & (0,a) & \mapsto & S^8 \\
       & (\sigma_*\eta_3,a) & \mapsto & S^8 \\
       & (i_*b\eta_3,a) & \mapsto & \Sigma^8 \\
       & (\sigma_*\eta_3+i_*b\eta_3,a) & \mapsto & \Sigma^8 \\
    \end{array}
  \]
\end{prop}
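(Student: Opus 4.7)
The plan is to reduce Proposition \ref{thm:14} to at most two substantive evaluations of the Milnor pairing $T$, then invoke Frank's calculations from \cite{Fra68}. By bilinearity of $T$, the entry $(0,a) \mapsto S^8$ is automatic (the zero element maps to the standard sphere), and the fourth line
\begin{equation*}
T(\sigma_*\eta_3 + i_*b\eta_3, a) = T(\sigma_*\eta_3, a) + T(i_*b\eta_3, a)
\end{equation*}
is forced by the second and third lines once the group operation in $\Theta_8 \cong \mathbb{Z}_2$ is used. So everything hinges on the two values $T(\sigma_*\eta_3,a)$ and $T(i_*b\eta_3,a)$.

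Next I would recall Milnor's construction: for $\alpha \in \pi_n(SO(k+1))$ and $\beta \in \pi_k(SO(n+1))$, one forms a homotopy $(k+n+1)$-sphere by clutching two disk bundles whose clutching data are built from $\alpha$ and $\beta$; the resulting class in $\Theta_{k+n+1}$ is $T(\alpha,\beta)$. Setting $k=3$, $n=4$, Lemma \ref{thm:12} identifies our $\tau$ with $T$, so the analysis of $\hat\phi$ reduces to computing $T$ on the generators of Lemma \ref{thm:5}. The generator $a \in \pi_3(SO(5))$ is $i_*\sigma_*\iota_3$, and $\pi_4(SO(4))$ splits as $\mathbb{Z}_2\{\sigma_*\eta_3\} \oplus \mathbb{Z}_2\{i_*b\eta_3\}$ via the cross section $\sigma$.

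The actual computation is the content of \cite{Fra68}, which tabulates the Milnor pairings in low dimensions. I would extract from there that the class $i_*b\eta_3$ (the piece coming from $\pi_4(SO(3))$ via stabilization) pairs with $a$ to give the exotic 8-sphere $\Sigma^8$, whereas $\sigma_*\eta_3$ (the piece detected by the cross section $\sigma: S^3 \to SO(4)$) pairs with $a$ trivially. Heuristically, the $\sigma_*$ summand is stably trivial in the right sense, so its contribution to the twisted sphere is diffeomorphic to the standard $S^8$, while the $SO(3)$-summand supplies the only nontrivial element of $\Theta_8$.

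The main obstacle is bookkeeping: Frank's paper uses its own generators for $\pi_4(SO(4))$ and $\pi_3(SO(5))$, and one must carefully match them with the generators $i_*b\eta_3$, $\sigma_*\eta_3$, $a = i_*\sigma_*\iota_3$ fixed in Lemma \ref{thm:5}. In particular, one has to verify that the summand of $\pi_4(SO(4))$ picked out by Frank as producing the exotic sphere is precisely $i_*b\eta_3$ and not $\sigma_*\eta_3$; this uses the fact that $\sigma_*\eta_3$ lives in the image of the cross section $\sigma: S^3 \to SO(4)$, whose image is tangent to the fiber $S^3 = SO(4)/SO(3)$ and so contributes nothing to the Milnor construction modulo diffeomorphism with $S^8$. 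Once this identification is pinned down, the proposition follows line-by-line from Frank's table together with bilinearity.
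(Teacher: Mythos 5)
Your reduction via bilinearity to the two values $T(\sigma_*\eta_3,a)$ and $T(i_*b\eta_3,a)$, and the appeal to Frank's tables, matches the structure of the paper's proof. The gap is precisely in the step you flag as ``the main obstacle'': translating Frank's generators into the ones fixed in Lemma~\ref{thm:5}. The paper resolves this by invoking Lemma~\ref{thm:8}(2), which records that the $J$-homomorphism carries $\sigma_*\eta_3\mapsto\nu_4\eta_7$ and $i_*b\eta_3\mapsto a_4\eta_7$ in $\pi_8(S^4)$; this is the dictionary that lets one read off, from Frank's Lemma~1 and Theorem~3, which summand of $\pi_4(SO(4))$ produces $\Sigma^8$.

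Your substitute heuristic does not do this job. The assertion that ``$\sigma_*\eta_3$ ... is stably trivial in the right sense'' cannot be the distinguishing feature, since $\pi_4(SO)=0$ and hence \emph{every} element of $\pi_4(SO(4))$ is stably trivial. The claim that $\sigma(S^3)$ is ``tangent to the fiber $S^3=SO(4)/SO(3)$'' is also off: $SO(4)/SO(3)=S^3$ is the base of the fibration $SO(3)\to SO(4)\to S^3$, and $\sigma$ is a section, not a map into the fiber. Neither observation actually singles out $\sigma_*\eta_3$ as the generator pairing trivially with $a$. To make the argument rigorous you should replace this paragraph with the precise identification supplied by Lemma~\ref{thm:8}(2), which is exactly what the paper does.
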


\begin{proof}
  Combine \cite [Lemma 1, Theorem 3]{Fra68} and Lemma \ref{thm:8} (2).
\end{proof}

\begin{cor}\label{thm:15}
  The orbits of $D_1\oplus\alpha$ under the action $\hat{\phi}$ are:
  \begin{gather*}
    \{0\},\{\Sigma^8\},\{i_*b\eta_3,i_*b\eta_3+\Sigma^8\}, \\
    \{\sigma_*\eta_3, \sigma_*\eta_3+i_*b\eta_3, \sigma_*\eta_3+\Sigma^8, \sigma_*\eta_3+i_*b\eta_3+\Sigma^8\}.
  \end{gather*}
\end{cor}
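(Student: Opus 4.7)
The plan is a direct computation: iterate the single permutation $\hat{\phi}(a)$ of $D_1\oplus\alpha$ and collect its cycles. The key preliminary observation is that $D_2\cong\pi_3(SO(5))\cong\mathbb{Z}\{a\}$ is infinite cyclic, so the orbit structure of the entire group action on the $8$-element set $D_1\oplus\alpha$ is determined by the orbits of $a$ alone (negative powers add nothing, since an action of $\mathbb{Z}$ on a finite set factors through a finite cyclic quotient). Using the formula $\hat{\phi}(a).(x,s)=(\phi(a)(x),\,s+\tau(a)(x))$ for $x\in D_1$, $s\in\alpha$, the computation reduces to the values of $\phi(a)$ and $\tau(a)$ on $D_1$, which are exactly the output of Propositions \ref{thm:13} and \ref{thm:14}.

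First I would tabulate $\phi(a)$ and $\tau(a)$ on the four elements of $D_1\cong\mathbb{Z}_2\{\sigma_*\eta_3\}\oplus\mathbb{Z}_2\{i_*b\eta_3\}$. Proposition \ref{thm:13} says $\phi(a)$ fixes $0$ and $i_*b\eta_3$ and exchanges $\sigma_*\eta_3\leftrightarrow \sigma_*\eta_3+i_*b\eta_3$. Proposition \ref{thm:14}, combined with the additivity of $\tau(a)\colon D_1\to\alpha$ in its first variable, shows $\tau(a)$ sends $i_*b\eta_3$ and $\sigma_*\eta_3+i_*b\eta_3$ to $\Sigma^8$ and vanishes on $0$ and $\sigma_*\eta_3$.

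Next I would read off the four orbits. Since $\phi(a)$ and $\tau(a)$ both vanish at $0\in D_1$, the elements $0$ and $\Sigma^8$ are fixed by $\hat{\phi}(a)$, producing two singleton orbits. Because $\phi(a)$ fixes $i_*b\eta_3$ while $\tau(a)$ toggles the $\alpha$-coordinate, the pair $\{i_*b\eta_3,\ i_*b\eta_3+\Sigma^8\}$ is a single $2$-cycle. Finally, starting from $\sigma_*\eta_3$ and iterating gives
\[
\sigma_*\eta_3 \mapsto \sigma_*\eta_3+i_*b\eta_3 \mapsto \sigma_*\eta_3+\Sigma^8 \mapsto \sigma_*\eta_3+i_*b\eta_3+\Sigma^8 \mapsto \sigma_*\eta_3,
\]
so the remaining four elements assemble into a single $4$-cycle.

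There is no real obstacle: the whole argument is bookkeeping, and the partition $1+1+2+4=8$ serves as a built-in sanity check that all of $D_1\oplus\alpha$ has been accounted for. The only conceptual point worth flagging, for readability, is the reduction of the $\mathbb{Z}$-action to the single permutation $\hat{\phi}(a)$; everything else is mechanical substitution into the formulas of Propositions \ref{thm:13} and \ref{thm:14}.
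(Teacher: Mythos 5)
Your proof is correct and is exactly the computation the paper leaves implicit (the paper states the corollary without proof, as an immediate consequence of Propositions \ref{thm:13} and \ref{thm:14}). The only small economy you could make is that Proposition \ref{thm:14} already lists $\tau(a)$ on all four elements of $D_1$, so no appeal to additivity of $\tau(a)$ is needed; otherwise the bookkeeping, including the reduction of the $\mathbb{Z}\{a\}$-action to the single permutation $\hat{\phi}(a)$, is precisely right.
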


\begin{proof}[Proof of Theorem \ref{thm:16} \ref{1.3}]
  By Corollary \ref{thm:15}, there are at most $4$ elements in $\mathcal{S}$: $S^3\times S^5$, $S^3\times S^5\#\Sigma^8$, $SU(3)$ and $M_{1,0}$. As $\alpha\cong\Theta_8$ is invariant under the action of $B$, $0$ and $\Sigma^8$ will lie in different orbits at last. Thus $S^3\times S^5$ is not diffeomorphic to $S^3\times S^5\#\Sigma^8$. The theorem follows.
\end{proof}

Corollary \ref{thm:15} also implies:

\begin{cor}
  Let $S$ be the total space of the $S^3$-bundle over $S^5$ with clutching map $i_*b\eta_3+\sigma_*\eta_3$, then $S$ is diffeomorphic to $SU(3)$.
\end{cor}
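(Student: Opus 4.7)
My plan is to trace the manifold $S$ through the orbit-classification bijection $\mathcal{S}\leftrightarrow (D_1\oplus\alpha)/\bar{D}_2$ supplied by Corollary~\ref{thm:4}, and identify which orbit it lands in.

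First I would observe that, by the very definition of a fibre-bundle clutching construction, the total space of the $S^3$-bundle over $S^5$ with clutching map $\xi\in\pi_4(SO(4))$ is obtained by gluing two copies of $S^3\times D^5$ along $S^3\times S^4$ via the diffeomorphism $e_1(\xi)$. Hence $S=M(e_1(\sigma_*\eta_3+i_*b\eta_3))$, and after identifying $D_1^{3,4}$ with $\pi_4(SO(4))$ via Corollary~\ref{thm:9}, its class in $\mathcal{S}$ is represented by the element $\sigma_*\eta_3+i_*b\eta_3\in D_1\oplus\alpha$, with vanishing $\alpha$-component.

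Next I would read off from Corollary~\ref{thm:15} that this element lies in the four-element orbit
\[
\{\sigma_*\eta_3,\ \sigma_*\eta_3+i_*b\eta_3,\ \sigma_*\eta_3+\Sigma^8,\ \sigma_*\eta_3+i_*b\eta_3+\Sigma^8\}.
\]
The final step is to match the four orbits of Corollary~\ref{thm:15} with the four diffeomorphism classes of Theorem~\ref{thm:16}~\ref{1.3}. The orbits $\{0\}$ and $\{\Sigma^8\}$ correspond respectively to $S^3\times S^5$ and $S^3\times S^5\#\Sigma^8$, since the $\alpha$-action is by connected sum with an exotic sphere. The orbit $\{i_*b\eta_3,\ i_*b\eta_3+\Sigma^8\}$ corresponds to $M_{1,0}$ because, by definition, $M_{1,0}$ is the total space of the bundle with clutching map $i_*b\eta_3$. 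By elimination, the remaining four-element orbit, which contains $\sigma_*\eta_3+i_*b\eta_3$, must correspond to $SU(3)$; consequently $S\cong SU(3)$.

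I do not foresee any substantive obstacle: every ingredient is already in place by Corollaries~\ref{thm:4}, \ref{thm:9} and~\ref{thm:15}. The one bookkeeping subtlety is to be certain the $\alpha$-component of the clutching datum is $0$ rather than $\Sigma^8$, so that $S$ is identified with a representative in the first half of the orbit; this is automatic since $S$ arises from a genuine smooth bundle clutching, with no exotic-sphere twist introduced.
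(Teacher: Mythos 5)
Your proposal is correct and follows essentially the same route as the paper: the paper derives this corollary directly from Corollary~\ref{thm:15} (via the orbit correspondence of Corollary~\ref{thm:4}), and you have simply spelled out the steps — identifying $S$ with $M(e_1(\sigma_*\eta_3+i_*b\eta_3))$, locating it in the four-element orbit, and matching that orbit to $SU(3)$ by elimination using the clutching-map identifications of the other orbits and the known homotopy types.
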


\section{Further discussions}\label{sec:6}

As we've introduced in section \ref{sec:1}, for a $7$-manifold of type ($r$), the total space of the $S^1$-bundle with Euler class $u$ is an $8$-manifold of type ($\ast$). Conversely, if an $8$-manifold of type ($\ast$) admits a smooth free $S^1$-action, the orbit space will be a $7$-manifold of type ($r$).

In \cite{EZ14}, Escher and Ziller have given many remarks and conjectures concerning the total spaces of $S^1$-bundles over certain such $7$-manifolds. Unfortunately, some of them are wrong. We'll go through these examples in detail. It seems that finding a general way to determine the total spaces will be more helpful.

The first class of manifolds are $S^3$-bundles over $\mathbb{C}P^2$. It is well known that a $4$-dimensional vector bundle over $\mathbb{C}P^2$ is classified by its second Stiefel-Whitney class, first Pontryagin class and Euler class. Up to isomorphism, they can be constructed as follows. Recall that $\vect_{\mathbb{R}}^4(S^4)\cong \pi_3(SO(4))\cong \mathbb{Z}\{\alpha\}\oplus\mathbb{Z}\{\beta\}$ with
\begin{center}
  \begin{tabular}{c|c|c}
   & $\alpha$ & $\beta$ \\
  \hline
  $e$ & $0$ & $\omega_{S^4}$ \\
  \hline
  $p_1$ & $4\omega_{S^4}$ & $-2\omega_{S^4}$\\
\end{tabular}
\end{center}
where $\omega_{S^4}$ is the orientation cohomology class of $S^4$. Choose an embedded disk $D\cong D^4$ in $\mathbb{C}P^2$. Let $q: \mathbb{C}P^2\to S^4$ be the map collapsing the outer of $D$ to a point, and $v: \mathbb{C}P^2 \to \mathbb{C}P^2\vee S^4$ be the map just shrinking the boundary of $D$ to a point. Then all $4$-dimensional vector bundles over $\mathbb{C}P^2$ can be constructed by the following two pullback squares:
\[
\begin{matrix}
  \text{Spin case:} & \quad & \text{Nonspin case:} \\
  \begin{tikzcd}
q^*(\xi_{k,l}) \arrow[r] \arrow[d] & \xi_{k,l} \arrow[d] \\
\mathbb{C}P^2 \arrow[r,"q"] & S^4
\end{tikzcd} & \quad &
\begin{tikzcd}
v^*(\xi'_{k,l}) \arrow[r] \arrow[d] & \xi'_{k,l} \arrow[d] \\
\mathbb{C}P^2 \arrow[r,"v"] & \mathbb{C}P^2 \vee S^4
\end{tikzcd}
\end{matrix}
\]
where $\xi_{k,l}=k\alpha+l\beta$, $\xi'_{k,l}|_{\mathbb{C}P^2}=\gamma\oplus\epsilon^2$, $\xi'_{k,l}|_{S^4}=\xi_{k,l}$, with $\gamma$ the tautological line bundle over $\mathbb{C}P^2$. Now let $N_{k,l}=S(q^*(\xi_{k,l}))$, $N'_{k,l}=S(v^*(\xi'_{k,l}))$. Then $N_{k,l}$ and $N'_{k,l}$ are $1$-connected, and of type ($0$) if $l=0$, or of type ($r$) ($r\geq 1$) if $l\neq 0$. And we have $H^4(N_{k,l})\cong H^4(N'_{k,l})\cong \mathbb{Z}_l$.

Let $E_{k,l}$ and $E'_{k,l}$ be the total space of the $S^1$-bundle over $N_{k,l}$ and $N'_{k,l}$, respectively, with Euler class a chosen generator of $H^2$. Then we have the following pullback squares of fiber bundles:
\[
\begin{tikzcd}
 & S^3 \arrow[d] & S^3 \arrow[d] & S^3 \arrow[d] & S^3 \arrow[d] \\
S^1 \arrow[r] & E_{k,l} \arrow[r] \arrow[d] & N_{k,l} \arrow[d] \arrow[r] & S(\xi_{k,l}) \arrow[r] \arrow[d] & S(\gamma^4) \arrow[d] \\
S^1 \arrow[r] & S^5 \arrow[r,"\pi"] & \mathbb{C}P^2 \arrow[r,"q"] & S^4 \arrow[r,"f_{k,l}"] & BSO(4)
\end{tikzcd}
\]
\[
\begin{tikzcd}
 & S^3 \arrow[d] & S^3 \arrow[d] & S^3 \arrow[d] & S^3 \arrow[d] \\
S^1 \arrow[r] & E'_{k,l} \arrow[r] \arrow[d] & N'_{k,l} \arrow[d] \arrow[r] & S(\xi'_{k,l}) \arrow[r] \arrow[d] & S(\gamma^4) \arrow[d] \\
S^1 \arrow[r] & S^5 \arrow[r,"\pi"] & \mathbb{C}P^2 \arrow[r,"v"] & \mathbb{C}P^2\vee S^4 \arrow[r,"\widehat{\gamma}\vee f_{k,l}"] & BSO(4)
\end{tikzcd}
\]
Here $\gamma^4$ is the universal $4$-dimensional vector bundle, $\widehat{\gamma}$ and $f_{k,l}$ are classifying maps of $\gamma\oplus\epsilon^2$ and $\xi_{k,l}$, respectively, and note that $f_{k,l}=ki_*b+ l\sigma_*\iota_3$, using our previous notations in Lemma \ref{thm:5}. Thus $E_{k,l}$ and $E'_{k,l}$ are actually total spaces of $S^3$-bundles over $S^5$. To determine them, we only need to determine the classifying maps.

We will identify $\pi_n(BSO(m))$ with $\pi_{n-1}(SO(m))$. The corresponding element of $\alpha\in \pi_{n-1}(SO(m))$ in $\pi_n(BSO(m))$ will be denoted by $\widetilde{\alpha}$.

\begin{prop}
  \begin{enumerate}
    \item $f_{k,l}q\pi=0$;
    \item $(\widehat{\gamma}\vee f_{k,l})v\pi=k\widetilde{i_*b\eta_3}+l\widetilde{\sigma_*\eta_3}$.
  \end{enumerate}
\end{prop}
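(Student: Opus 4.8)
The plan is to compute each composite by going to homotopy groups, using that $\pi_5(BSO(4)) \cong \pi_4(SO(4))$ and $\pi_5(BSO(4)\text{-relevant space}) \cong \pi_5$, and tracking the maps $q\pi$ and $v\pi$ on the level of $S^5 \to \mathbb{C}P^2 \to (\text{target})$. For item (1), the key observation is that $q\pi: S^5 \to \mathbb{C}P^2 \to S^4$ factors through the quotient $S^5 \to S^5/S^3 = \mathbb{C}P^2 \to S^4$ by collapsing the complement of a disk; but $q\pi$ collapses everything outside a $4$-disk in $\mathbb{C}P^2$ whose preimage under $\pi$ is an $S^1$-bundle over $D^4$, i.e. contractible to a circle, so $q\pi$ is null or at least lands in $\pi_5(S^4) \cong \mathbb{Z}_2$; I would argue more carefully that $q\pi$ is actually nullhomotopic (it factors through $\pi: S^5 \to \mathbb{C}P^2$ followed by a map to $S^4$, and by suspension/Freudenthal considerations $\pi_5(\mathbb{C}P^2) \to \pi_5(S^4)$ induced by $q$ is zero because $q$ has Hopf invariant considerations that kill the relevant class — or simply $q_*\eta_{\mathbb{C}P^2}$ vanishes). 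Hence $f_{k,l} \circ (q\pi) = 0$ regardless of $f_{k,l}$.

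For item (2), I would use that $(\widehat{\gamma} \vee f_{k,l}) \circ v \circ \pi: S^5 \to \mathbb{C}P^2 \to \mathbb{C}P^2 \vee S^4 \to BSO(4)$. The map $v\pi: S^5 \to \mathbb{C}P^2 \vee S^4$ needs to be identified as an element of $\pi_5(\mathbb{C}P^2 \vee S^4)$; by the Hilton–Milnor theorem this splits into a $\pi_5(\mathbb{C}P^2)$ component, a $\pi_5(S^4) \cong \mathbb{Z}_2\{\eta_4\}$ component, and Whitehead product terms (which vanish in dimension $5$ here since the lowest Whitehead product $[\iota_{\mathbb{C}P^2}, \iota_4]$ would live in too high a degree — $\mathbb{C}P^2$ is $2$-connected through dimension... actually $[\iota_2, \iota_4] \in \pi_5$, so this term must be handled). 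The crucial input is that $v$ is the pinch map, so on $\pi_5$ composed with $\pi$: the $S^4$-component of $v\pi$ should be exactly $q\pi = 0$ from part (1) reasoning combined with the fact that the composite $S^5 \xrightarrow{v\pi} \mathbb{C}P^2 \vee S^4 \to S^4$ (collapse $\mathbb{C}P^2$) equals $q\pi$. Then $(\widehat\gamma \vee f_{k,l})_*$ sends the $\mathbb{C}P^2$-component (which contributes via $\widehat\gamma$, the classifying map of $\gamma \oplus \epsilon^2$, stably trivial-ish so its effect on $\pi_5$ factors through $\pi_5(BSO(4))$ via $\widehat\gamma_* \pi_*$, and one computes $\widehat\gamma_*(\pi_*\iota_5)$) and the $S^4$-component via $f_{k,l} = k i_*b + l\sigma_*\iota_3$, precomposed with the suspension $\eta$-type class.

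The heart of the computation, and the main obstacle, is pinning down exactly which element of $\pi_5(\mathbb{C}P^2 \vee S^4)$ the map $v\pi$ represents, and then evaluating $\widehat\gamma$ and $f_{k,l}$ on the respective Hilton summands. I expect that $v\pi$ equals $\widehat{j}_2 \circ (\text{class in } \pi_5(\mathbb{C}P^2)) + \widehat{j}_4 \circ \eta$-type term, where the $S^4$-term carries the $\eta_3$-suspension (this is where the $\eta_3$ in $i_*b\eta_3$ and $\sigma_*\eta_3$ comes from). The identity $f_{k,l} = ki_*b + l\sigma_*\iota_3$ then gives $f_{k,l} \circ (\text{the } \eta\text{-class}) = k\,\widetilde{i_*b\eta_3} + l\,\widetilde{\sigma_*\eta_3}$ by Lemma~\ref{thm:17}(2) (the $J$-type composition formula), while the $\widehat\gamma$-contribution must vanish — plausibly because $\gamma \oplus \epsilon^2$ restricted appropriately is classified by a map whose relevant $\pi_5$-image is zero (the tautological bundle has $\pi_5(\mathbb{C}P^2) \to \pi_5(BSO(4))$ trivial since $\gamma$ is complex line bundle, pulled back from $BU(1) = \mathbb{C}P^\infty$ which is $3$-coconnected in the relevant range, so $\pi_5$ dies, and $\epsilon^2$ is trivial). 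Verifying these two vanishing claims (the $S^4$-component of part (1) and the $\widehat\gamma$-component of part (2)) and the precise Hilton decomposition of $v\pi$ is the delicate part; everything else is bookkeeping with the groups from Lemma~\ref{thm:5} and Lemma~\ref{thm:18}.
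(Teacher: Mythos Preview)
Your proposal contains a genuine gap in part (2), and the mechanism you outline cannot produce the answer.

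You correctly observe that the projection of $v\pi$ onto the $S^4$ summand of $\mathbb{C}P^2\vee S^4$ is exactly $q\pi$, which by part (1) is null. You then contradict yourself by expecting ``the $S^4$-term carries the $\eta_3$-suspension'' and computing $f_{k,l}\circ(\text{the }\eta\text{-class})$. There is no such $\eta_4$-term: the $\pi_5(S^4)$-component of $v\pi$ is zero, so $f_{k,l}$ applied to it contributes nothing. Your proposed source for the classes $\widetilde{i_*b\eta_3}$ and $\widetilde{\sigma_*\eta_3}$ simply is not there.

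The entire content lives in the Whitehead product summand you first tried to dismiss and then left unhandled. In the Hilton decomposition
\[
\pi_5(\mathbb{C}P^2\vee S^4)\cong \mathbb{Z}\{\pi\}\oplus\mathbb{Z}_2\{\eta_4\}\oplus\mathbb{Z}\{[\iota_2,\iota_4]\},
\]
one has $v\pi=\pi\pm[\iota_2,\iota_4]$ (the $\mathbb{C}P^2$-projection is $\pi$, the $S^4$-projection is $q\pi=0$, and the Whitehead coefficient is $\pm1$ because the $S^4$ wedge summand is the top cell of $\mathbb{C}P^2$). Pushing forward by $\widehat{\gamma}\vee f_{k,l}$, the $\pi$-term dies for the reason you gave (the tautological line bundle extends over $\mathbb{C}P^3$), while the Whitehead product maps to
\[
[\widehat{\gamma}_*\iota_2,(f_{k,l})_*\iota_4]=[\widetilde{\epsilon_1},\,k\widetilde{i_*b}+l\widetilde{\sigma_*\iota_3}]\in\pi_5(BSO(4)),
\]
where $\epsilon_1$ generates $\pi_1(SO(4))$. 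This Whitehead product in $BSO(4)$ corresponds to the Samelson product $\langle\epsilon_1,\,ki_*b+l\sigma_*\iota_3\rangle$ in $SO(4)$, and one computes $\langle\epsilon_1,i_*b\rangle=i_*b\eta_3$ and $\langle\epsilon_1,\sigma_*\iota_3\rangle=\sigma_*\eta_3$. That is where the $\eta_3$ actually comes from, not from precomposition on the $S^4$ wedge factor.

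For part (1), your sketch is not a proof: you need a concrete reason $q\pi=0$ in $\pi_5(S^4)\cong\mathbb{Z}_2$. The clean argument is that the cofibre of $\mathbb{C}P^1\hookrightarrow\mathbb{C}P^3$ is $S^4\cup_{q\pi}e^6$, and $Sq^2\colon H^4(\mathbb{C}P^3;\mathbb{Z}_2)\to H^6(\mathbb{C}P^3;\mathbb{Z}_2)$ vanishes, so the attaching map cannot be $\eta_4$.
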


\begin{proof}
  \begin{enumerate}
    \item $\mathbb{C}P^3/\mathbb{C}P^1\simeq S^4\cup_{q\pi}S^6$. As $Sq^2=0: H^4(\mathbb{C}P^3;\mathbb{Z}_2)\to H^6(\mathbb{C}P^3;\mathbb{Z}_2)$, it follows that $q\pi=0$.
    \item $\pi_5(\mathbb{C}P^2\vee S^4)\cong \mathbb{Z}\{\pi\}\oplus\mathbb{Z}_2\{\eta_4\}\oplus\mathbb{Z}\{[\iota_2,\iota_4]\}$. It is easy to see that $v\pi=\pi\pm [\iota_2,\iota_4]$. Then
        \[
        (\widehat{\gamma}\vee f_{k,l})v\pi=(\widehat{\gamma}\vee f_{k,l})(\pi\pm [\iota_2,\iota_4])=\widehat{\gamma}\pi\pm [\widetilde{\epsilon_1},k\widetilde{i_*b}+l\widetilde{\sigma_*\iota_3}],
        \]
        where $\pi_1(SO(4))\cong \mathbb{Z}_2\{\epsilon_1\}$. $\widehat{\gamma}\pi=0$, as $\gamma$ can be extended to a bundle over $\mathbb{C}P^3$, i.e. the tautological line bundle over $\mathbb{C}P^3$. Thus
        \[
        (\widehat{\gamma}\vee f_{k,l})v\pi=k[\widetilde{\epsilon_1},\widetilde{i_*b}]+ l[\widetilde{\epsilon_1},\widetilde{\sigma_*\iota_3}]=k\widetilde{\langle \epsilon_1,i_*b\rangle}+l\widetilde{\langle \epsilon_1, \sigma_*\iota_3 \rangle },
        \]
        where $\langle\cdot , \cdot\rangle$ denote the Samelson pruduct. Similar arguments as in the proof of \cite [theorem 1.1]{KKKT07} shows that $\langle \epsilon_1, \sigma_*\iota_3 \rangle=\sigma_*\eta_3$ and $\langle \epsilon_1,i_*b\rangle=i_*b\eta_3$. Hence the result follows.
  \end{enumerate}
\end{proof}

Immediately, we have

\begin{cor}
  \begin{enumerate}
    \item $E_{k,l}\cong S^3\times S^5$;
    \item $E'_{k,l}\cong
    \begin{cases}
      S^3\times S^5, & \mbox{if $k,l$ are even,}\\
      M_{1,0}, & \mbox{if $k$ is odd, $l$ is even,} \\
      SU(3), & \mbox{if $l$ is odd}.
    \end{cases}$
  \end{enumerate}
  Consequently, all topological $8$-manifolds of type ($\ast$) admit infinite free $S^1$-actions.
\end{cor}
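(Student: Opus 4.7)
The plan is to read off both diffeomorphism types directly from the preceding proposition by identifying $E_{k,l}$ and $E'_{k,l}$ as $S^3$-bundles over $S^5$ and computing their clutching maps in $\pi_4(SO(4))$. In the two pullback diagrams the bottom row exhibits the classifying map $S^5\to BSO(4)$ of each bundle as a specific composition through $\mathbb{C}P^2$. For $E_{k,l}$ this composition is $f_{k,l}\,q\,\pi$, which vanishes by part (1) of the Proposition, so the bundle is trivial and $E_{k,l}\cong S^3\times S^5$. For $E'_{k,l}$ the composition is $(\widehat{\gamma}\vee f_{k,l})\,v\,\pi = k\,\widetilde{i_*b\eta_3} + l\,\widetilde{\sigma_*\eta_3}$ by part (2), so $E'_{k,l}$ is the $S^3$-bundle over $S^5$ with clutching map $k\,i_*b\eta_3 + l\,\sigma_*\eta_3 \in \pi_4(SO(4))$.

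Next I would invoke Lemma \ref{thm:5}(2), which shows that both $i_*b\eta_3$ and $\sigma_*\eta_3$ have order $2$, so the clutching map depends only on the parities of $k$ and $l$. This reduces the analysis to the four residue classes $(k,l)\bmod 2$. The case $(0,0)$ gives the trivial bundle $S^3\times S^5$; the case $(1,0)$ gives clutching map $i_*b\eta_3$, which is $M_{1,0}$ by the very definition used in Section \ref{sec:3}; the case $(1,1)$ gives clutching map $i_*b\eta_3+\sigma_*\eta_3$, which is $SU(3)$ by the unnumbered corollary at the end of Section \ref{sec:5}. The only case requiring extra thought is $(0,1)$, with clutching map $\sigma_*\eta_3$. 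This is where the hard work of Section \ref{sec:5} pays off: by Corollary \ref{thm:15}, the elements $\sigma_*\eta_3$ and $\sigma_*\eta_3+i_*b\eta_3$ lie in the same $\hat{\phi}$-orbit in $D_1\oplus\alpha$, hence by Proposition \ref{thm:3} the two resulting $S^3$-bundles are diffeomorphic, i.e.\ also $SU(3)$. This is the main non-formal step, since it is not visible at the level of the clutching maps alone but rather uses the smooth classification of type $(\ast)$ manifolds.

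For the final sentence I would observe that each of the three topological $8$-manifolds in $\mathcal{T}$ is realized as $E_{k,l}$ or $E'_{k,l}$ for infinitely many $(k,l)$: $S^3\times S^5$ for every $(k,l)$ via the spin family, $M_{1,0}$ whenever $k$ is odd and $l$ even, and $SU(3)$ whenever $l$ is odd. Each realization equips the manifold with a free $S^1$-action whose orbit space is $N_{k,l}$ or $N'_{k,l}$; since $H^4(N_{k,l})\cong H^4(N'_{k,l})\cong \mathbb{Z}_l$, letting $l$ range through a fixed parity class produces infinitely many pairwise non-homeomorphic quotients and therefore infinitely many inequivalent free $S^1$-actions on each total space.
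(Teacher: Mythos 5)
Your proof is correct and follows the reasoning the paper leaves implicit behind the word ``Immediately.'' You correctly identify $E_{k,l}$ and $E'_{k,l}$ as $S^3$-bundles over $S^5$ with the classifying maps given by the preceding proposition, reduce to the parities of $(k,l)$ via Lemma~\ref{thm:5}(2), match the four residue classes with the four known manifolds, and handle the only non-immediate case $(k,l)\equiv(0,1)$ by appealing to the orbit $\{\sigma_*\eta_3,\ \sigma_*\eta_3+i_*b\eta_3,\ \dots\}$ of Corollary~\ref{thm:15} together with Corollary~\ref{thm:4}. One small streamlining worth noting: the unnumbered corollary closing Section~\ref{sec:5} is itself deduced from Corollary~\ref{thm:15} and the proof of Theorem~\ref{thm:16}(3), where that entire orbit is matched with $SU(3)$ by elimination; invoking that observation directly handles the $(0,1)$ and $(1,1)$ cases in one stroke rather than routing the $(1,1)$ case through the corollary and the $(0,1)$ case separately through the orbit structure. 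Your argument for the infinitude of free $S^1$-actions --- distinguishing the actions by the cohomology $H^4\cong\mathbb{Z}_l$ of the quotients $N_{k,l}$, $N'_{k,l}$ as $l$ varies within a parity class --- is the intended one and is sound.
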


\begin{cor}
  \begin{enumerate}
    \item $\pi_4(N_{k,l})\cong \mathbb{Z}_2$;
    \item $\pi_4(N'_{k,l})\cong
    \begin{cases}
      0, & \mbox{if $l$ is odd,} \\
      \mathbb{Z}_2, & \mbox{if $l$ is even}.
    \end{cases}$
  \end{enumerate}
\end{cor}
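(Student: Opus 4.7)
The plan is to reduce both computations to $\pi_4$ of the $8$-manifolds that arise as total spaces. From the $S^1$-fibrations $S^1 \to E_{k,l} \to N_{k,l}$ and $S^1 \to E'_{k,l} \to N'_{k,l}$, the long exact sequences of homotopy groups give $\pi_n(N) \cong \pi_n(E)$ for every $n \geq 2$, since $\pi_n(S^1) = 0$. Hence it suffices to compute $\pi_4$ on the list $\{S^3 \times S^5,\ M_{1,0},\ SU(3)\}$ and then read off the answer using the identifications supplied by the preceding corollary.

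The first case is immediate: $\pi_4(S^3 \times S^5) = \pi_4(S^3) \oplus \pi_4(S^5) = \mathbb{Z}_2$ by Lemma \ref{thm:18}. For $M_{1,0}$, I would invoke the homotopy equivalence $M_{1,0} \simeq X_{1,0}$ established in Section \ref{sec:3}: $X_{1,0}$ is built from $S^3 \vee S^5$ by attaching a single $8$-cell, so its $5$-skeleton is $S^3 \vee S^5$ and $\pi_4(M_{1,0}) \cong \pi_4(S^3 \vee S^5) \cong \mathbb{Z}_2$. For $SU(3)$, its $5$-skeleton is $\Sigma\mathbb{C}P^2 = S^3 \cup_{\eta_3} D^5$; applying homotopy excision (Blakers--Massey) to the pair $(\Sigma\mathbb{C}P^2, S^3)$ identifies $\pi_5(\Sigma\mathbb{C}P^2, S^3) \cong \pi_5(S^5) = \mathbb{Z}$, whose generator maps under $\partial$ to $\eta_3 \in \pi_4(S^3) = \mathbb{Z}_2$, and the long exact sequence of the pair then forces $\pi_4(\Sigma\mathbb{C}P^2) = 0$. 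Since the next cell of $SU(3)$ is in dimension $8$, this gives $\pi_4(SU(3)) = 0$.

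Combining these, $\pi_4(N_{k,l}) \cong \pi_4(E_{k,l}) \cong \pi_4(S^3 \times S^5) = \mathbb{Z}_2$ in every case, while the previous corollary tells us $E'_{k,l} \cong SU(3)$ precisely when $l$ is odd, giving $\pi_4(N'_{k,l}) = 0$ in that case; in the remaining cases $E'_{k,l}$ is $S^3 \times S^5$ or $M_{1,0}$, both having $\pi_4 = \mathbb{Z}_2$. No step is a serious obstacle; the one point deserving care is the vanishing of $\pi_4(SU(3))$, which is a folklore fact but is recorded cleanly by the $5$-skeleton argument above.
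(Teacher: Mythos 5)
Your proposal is correct and is precisely the (unwritten) argument behind the paper's ``Immediately, we have'': pass through the $S^1$-fibration to identify $\pi_4(N_{k,l})$ with $\pi_4(E_{k,l})$, use the preceding corollary to identify $E_{k,l}$ and $E'_{k,l}$, and then record $\pi_4(S^3\times S^5)=\pi_4(M_{1,0})=\mathbb{Z}_2$ and $\pi_4(SU(3))=0$. One small imprecision: the fibration long exact sequence gives $\pi_n(E)\cong\pi_n(N)$ only for $n\geq 3$ (at $n=2$ the term $\pi_1(S^1)=\mathbb{Z}$ intervenes), but since the statement concerns $\pi_4$ this does not affect the argument.
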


\begin{rmk}
  We do not know whether $S^3\times S^5\#\Sigma^8$ admits a smooth free $S^1$-action.
\end{rmk}

The second class of manifolds are certain $S^1$-bundles over $S^2$-bundles over $\mathbb{C}P^2$. $S^1$-bundles over these manifolds are just $T^2$-bundles over $S^2$-bundles over $\mathbb{C}P^2$. $T^2$-bundles are determined by two elements in $H^2$. Different pairs of elements which are in the same orbit under $SL(2,\mathbb{Z})$-actions give isomorphic $T^2$-bundles. As the second cohomology groups of $S^2$-bundles over $\mathbb{C}P^2$ are all isomorphic to $\mathbb{Z}\oplus\mathbb{Z}$, there is a unique $T^2$-bundle over each $S^2$-bundle over $\mathbb{C}P^2$ with $1$-connected total space, up to isomorphism. Thus $S^1$-bundles over such $7$-manifolds are only depend on the based $S^2$-bundles over $\mathbb{C}P^2$.

A $3$-dimensional vector bundle over $\mathbb{C}P^2$ is determined by its second Stiefel-Whitney class and first Pontryagin class, and can be constructed as follows:
\[
\begin{matrix}
  \text{Spin case:} & \quad & \text{Nonspin case:} \\
  \begin{tikzcd}
q^*(\xi_{k}) \arrow[r] \arrow[d] & \xi_{k} \arrow[d] \\
\mathbb{C}P^2 \arrow[r,"q"] & S^4
\end{tikzcd} & \quad &
\begin{tikzcd}
v^*(\xi'_{k}) \arrow[r] \arrow[d] & \xi'_{k} \arrow[d] \\
\mathbb{C}P^2 \arrow[r,"v"] & \mathbb{C}P^2 \vee S^4
\end{tikzcd}
\end{matrix}
\]
where $\xi_k\oplus\epsilon\cong \xi_{k,0}$ and $\xi'_k\oplus\epsilon\cong \xi'_{k,0}$. Let $L_k=S(\xi_k)$, $L'_k=S(\xi'_k)$. Gysin sequences show that $H^2(L_k)\cong\mathbb{Z}\{x_k\}\oplus\mathbb{Z}\{y_k\}$, $H^2(L'_k)\cong\mathbb{Z}\{x'_k\}\oplus\mathbb{Z}\{y'_k\}$, with $x_k$ and $x'_k$ pulled back from a generator of $H^2(\mathbb{C}P^2)$.

Now consider the $S^1$-bundles over $L_k$ and $L'_k$ with Euler class $x_k$ and $x'_k$, and write their total spaces as $P_k$ and $P'_k$, respectively. Then we have the following pullback squares of fiber bundles:
\[
\begin{tikzcd}
 & S^2 \arrow[d] & S^2 \arrow[d] & S^2 \arrow[d] & S^2 \arrow[d] \\
S^1 \arrow[r] & P_k \arrow[r] \arrow[d] & L_k \arrow[d] \arrow[r] & S(\xi_{k}) \arrow[r] \arrow[d] & S(\gamma^3) \arrow[d] \\
S^1 \arrow[r] & S^5 \arrow[r,"\pi"] & \mathbb{C}P^2 \arrow[r,"q"] & S^4 \arrow[r,"f_{k}"] & BSO(3)
\end{tikzcd}
\]
\[
\begin{tikzcd}
 & S^2 \arrow[d] & S^2 \arrow[d] & S^2 \arrow[d] & S^2 \arrow[d] \\
S^1 \arrow[r] & P'_k \arrow[r] \arrow[d] & L'_k \arrow[d] \arrow[r] & S(\xi'_{k}) \arrow[r] \arrow[d] & S(\gamma^3) \arrow[d] \\
S^1 \arrow[r] & S^5 \arrow[r,"\pi"] & \mathbb{C}P^2 \arrow[r,"v"] & \mathbb{C}P^2\vee S^4 \arrow[r,"\widehat{\gamma}\vee f_{k}"] & BSO(3)
\end{tikzcd}
\]
Similar arguments as the first example shows that
\[
f_kq\pi=0, (\widehat{\gamma}\vee f_k)v\pi=k\widetilde{b\eta_3}.
\]
Thus
\[
P_k\cong S^5\times S^2, P'_k\cong\begin{cases}
                  S^5\times S^2, & \mbox{if } k \text{ is even} \\
                  S^5\widetilde{\times} S^2, & \mbox{otherwise}.
                \end{cases}
\]
Since $S^5\widetilde{\times} S^2\simeq S^2\cup_{\eta_2\eta_3}e^5\cup e^7$, $\pi_4(S^5\widetilde{\times} S^2)\cong \pi_4(S^2\cup_{\eta_2\eta_3}e^5)=0$. It follows that the total space of the $S^1$-bundle over $S^5\widetilde{\times} S^2$ with Euler class a generator of $H^2(S^5\widetilde{\times} S^2)$ is diffeomorphic to $SU(3)$. For $S^5\times S^2$, it is obviously $S^5\times S^3$. Hence we have

\begin{cor}
  There are infinite free smooth $T^2$-actions on $S^3\times S^5$ and $SU(3)$.
\end{cor}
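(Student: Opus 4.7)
The idea is to let the parameter $k$ vary and show that the resulting free $T^2$-actions are pairwise inequivalent. Fix $k\in\mathbb{Z}$ in the spin case (or odd $k$ in the nonspin case) and let $M_k$ denote the total space of the $S^1$-bundle over $P_k$ (respectively $P'_k$) with Euler class a generator of $H^2$; by the preceding analysis, $M_k\cong S^3\times S^5$ (resp.\ $SU(3)$). The two commuting principal $S^1$-actions coming from $M_k\to P_k$ and $P_k\to L_k$ combine to a principal $T^2$-bundle structure $M_k\to L_k$, giving the desired free smooth $T^2$-action. Since equivalent $T^2$-actions have diffeomorphic orbit spaces, it is enough to show that, within each case, infinitely many of the $6$-manifolds $L_k$ (resp.\ $L'_k$ with $k$ odd) are pairwise non-diffeomorphic.

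I would distinguish them by their first Pontryagin class. Using the splitting $TL_k\cong\pi^*T\mathbb{C}P^2\oplus T_vL_k$, the stable isomorphism $T_vL_k\oplus\varepsilon\cong\pi^*\xi_k$, and the values $p_1(T\mathbb{C}P^2)=3u^2$, $p_1(\xi_k)=4ku^2$, one obtains
\[
p_1(TL_k)=(3+4k)\,\pi^*(u^2)\in H^4(L_k;\mathbb{Z}).
\]
Since $e(\xi_k)\in H^3(\mathbb{C}P^2)=0$, Leray--Hirsch presents $H^*(L_k)$ as the free $H^*(\mathbb{C}P^2)$-module on $\{1,y\}$ with $y\in H^2(L_k)$, so $H^4(L_k)=\mathbb{Z}\{\pi^*u^2\}\oplus\mathbb{Z}\{y\,\pi^*u\}$.

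For a diffeomorphism $f:L_k\xrightarrow{\cong}L_{k'}$, writing $f^*(\pi^*u^2)=a\,\pi^*u^2+b\,y\,\pi^*u$ in this basis, the relation $f^*p_1(TL_{k'})=p_1(TL_k)$ becomes $(3+4k')(a\,\pi^*u^2+b\,y\,\pi^*u)=(3+4k)\,\pi^*u^2$, forcing $b=0$ and $(3+4k')a=3+4k$; by symmetry $3+4k=\pm(3+4k')$, and the congruence $3+4k\equiv 3\pmod 4$ (while $-(3+4k')\equiv 1\pmod 4$) rules out the minus sign, so $k=k'$. The analogous computation in the nonspin case, with $p_1(\xi'_k)=(1+4k)u^2$ (obtained by pulling back $\widehat{\gamma}\vee f_{k,0}$ through $v$) and hence $p_1(TL'_k)=4(k+1)\,\pi^*(u^2)$, separates $L'_k$ from $L'_{k'}$ whenever $|k+1|\neq|k'+1|$, which still leaves infinitely many pairwise non-diffeomorphic $L'_k$ with $k$ odd.

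The main hurdle is really just the Leray--Hirsch identification of $H^4(L_k)$: it prevents a ring automorphism from spreading $\pi^*u^2$ across the two summands of $H^4$ in a way that could absorb the odd factor $3+4k'$. Once that structural fact is in hand, the remaining parity/divisibility bookkeeping is elementary, and the two separate counts yield the claimed infinite families of free $T^2$-actions on $S^3\times S^5$ and on $SU(3)$.
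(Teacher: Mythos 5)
Your proposal is correct and follows the same essential route as the paper: realize the $T^2$-actions as principal $T^2$-bundles $M_k\to L_k$ (resp.\ $L'_k$) over the $S^2$-bundles over $\mathbb{C}P^2$, use the computation of $P_k$, $P'_k$ to identify $M_k$ with $S^3\times S^5$ or $SU(3)$, and deduce inequivalence of the actions from non-diffeomorphism of the orbit spaces. The paper states this corollary without spelling out why the orbit spaces are pairwise distinct; your first-Pontryagin-class computation $p_1(TL_k)=(3+4k)\pi^*u^2$ and $p_1(TL'_k)=4(k+1)\pi^*u^2$, combined with the Leray--Hirsch description of $H^4$, supplies precisely that missing detail and is the standard way to do it.
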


\begin{rmk}
  Finally, we list some mistakes in \cite{EZ14} related to our discussion:
  \begin{enumerate}
    \item In p. 27, they stated that the only homotopy types of an $8$-manifold of type ($\ast$) which admits free $S^1$-actions are $S^3\times S^5$ and $SU(3)$. It turns out that $M_{1,0}$ also admit free $S^1$-actions and not homotopy equivalent to any of them.
    \item The remark in p. 34 and the second remark in p. 38 is true. Be careful that their notations are different from ours.
    \item The remark in p. 45 is incorrect. $\pi_4$ will always be $\mathbb{Z}_2$.
  \end{enumerate}
\end{rmk}

\bibliography{bib.bib}

\end{document}